\documentclass[12pt,a4paper]{article}

\setlength{\textheight}{624pt}
\setlength{\textwidth}{440pt}
\setlength{\oddsidemargin}{5pt}
\setlength{\topmargin}{-12pt}

\usepackage{graphicx}%
\usepackage{multirow}%
\usepackage{amsmath,amssymb,amsfonts}%
\usepackage{amsthm}%
\usepackage{mathrsfs}%
\usepackage[title]{appendix}%
\usepackage{xcolor}%
\usepackage{textcomp}%
\usepackage{manyfoot}%
\usepackage{booktabs}%
\usepackage{algorithm}%
\usepackage{algorithmicx}%
\usepackage{algpseudocode}%
\usepackage{listings}%
\usepackage{tikz}
\usepackage[font={small,it}]{caption}
\usepackage{dsfont}
\usepackage{hyperref}

\DeclareMathOperator*{\argmin}{arg\,min}

\newcommand{\e}{\varepsilon}
\newcommand{\N}{\mathbb{N}}
\newcommand{\R}{\mathbb{R}}
\renewcommand{\P}{\mathcal{P}}

\renewcommand{\d}{{\rm d}}
\newcommand{\g}{\boldsymbol{\gamma}}
\newcommand{\K}{\mathcal{K}}
\newcommand{\weakto}{\rightharpoonup}
\newcommand{\X}{\mathcal{P}_2(H)}

\bibliographystyle{abbrv}

\theoremstyle{plain}
\newtheorem{theorem}{Theorem}[section]
\newtheorem{corollary}[theorem]{Corollary}
\newtheorem{proposition}[theorem]{Proposition}
\newtheorem{lemma}[theorem]{Lemma}
\theoremstyle{definition}
\newtheorem{definition}[theorem]{Definition}
\newtheorem{remark}[theorem]{Remark}



\title{Generalized Wasserstein Barycenters}
\author{Francesco Tornabene, Marco Veneroni, Giuseppe Savar\'e}

\begin{document}
\maketitle

\abstract{We study the existence and uniqueness of the barycenter of a signed distribution of probability measures on a Hilbert space. The barycenter is found, as usual, as a minimum of a functional. In the case where the positive part of the signed measure is atomic, we can show also uniqueness. In the one-dimensional case, we characterize the quantile function of the unique minimum as the orthogonal projection of the $L^2$-barycenter of the quantiles on the cone of nonincreasing functions in $L^2(0,1)$. Further, we provide a stability estimate in dimension one and a counterexample to uniqueness in $\R^2$. Finally, we address the consistency of the barycenters and we prove that barycenters of a sequence of approximating measures converge (up to subsequences) to a barycenter of the limit measure.}

\paragraph{Keywords:} optimal transport, Wasserstein distance, barycenter, extrapolation

\paragraph{MSC Classification:} 49J27, 49J45, 49Q22


\section{Introduction and main results}
In this paper, we introduce an extension of the barycenter problem studied by Agueh and Carlier \cite{agueh-carlier} 
in the Wasserstein space. Let $H$ be a separable Hilbert space, let $\X$ be the space of Borel probability measures on $H$ with finite second moment, endowed with the 2-Wasserstein distance, denoted $W_2$. Let  $\lambda$ be a signed measure on the Borel $\sigma$-algebra of $\X$, such that $\lambda(\X)=1$ (see, e.g., \cite{bogachev} for a comprehensive treatise of measure theory).  We study the minimization of the functional 
\begin{equation}
\label{eq:functionalE}
	\mathcal E(\mu) = \int_{\X} W_2^2(\mu,\nu)\,\d\lambda(\nu),
\end{equation}
which we call the \emph{generalized} population barycenter problem of the signed measure $\lambda$. When $\lambda$ is concentrated on a discrete set $\{\nu_1,\ldots,\nu_n\}\subset \X$ and it is nonnegative, i.e.
\[
	\lambda_i \geq 0,\text{ for } i=1,\ldots,n,\qquad \sum_{i=1}^n \lambda_i=1,\qquad \lambda= \sum_{i=1}^n \lambda_i \delta_{\nu_i},
\] 
we recover the usual barycenter functional introduced in \cite{agueh-carlier}:
\[
	\mathcal E(\mu) = \sum_{i=1}^n \lambda_i W_2^2(\mu,\nu_i).
\]
The extension of the barycenter, from $n$ discrete measures to a continuous distribution of probability measures (i.e., a probability distribution, or `population`, of probability distributions) was studied in \cite{bigot2012, bigot2018, legouic}. In our contribution, we allow for a signed measure, i.e., for negative weights, as long as $\lambda(\X)>0$ (or, without loss of generality, equal to one). This condition is necessary since, even in the discrete case, if $x_i\in H$ and $\sum_{i=1}^n\lambda_i <0$, then
\[
		\inf_{x \in H} \sum_{i=1}^n \lambda_i |x-x_i|^2=-\infty.
\]
We denote by $\lambda = \lambda^+ - \lambda^-$ the (unique) Hahn-Jordan decomposition of $\lambda$,  by $|\lambda|=\lambda^+ +\lambda^-$ the total variation measure of $\lambda$, and  by $m_2^2(\nu)$ the second moment of a measure $\nu\in \X$:
\[
	m_2^2(\nu):=\int_H |x|^2\d\nu(x) <\infty.
\]
We prove the existence of a minimizer of \eqref{eq:functionalE} and, in the particular case where $\lambda^+$ is concentrated on a single measure in $\X$, we are able to show uniqueness of the solution. 
\begin{theorem}
\label{th:newmain}
Let $H$ be a real separable Hilbert space and let $\lambda$ be a signed measure on $\X$ with 
\begin{equation}
\label{eq:main_hyp}
	\lambda(\X)=1\qquad \text{and}\qquad  \int_{\X}  m_2^2(\nu)\d|\lambda|(\nu)<\infty.
\end{equation}
Define $\mathscr{E}:\X\to \R$ by
\[
	\mathscr{E}(\mu)=\int_{\X}  W^2_2(\mu,\nu)\d\lambda(\nu).
\]
Then:
(i) There exists a solution $\bar \mu \in \X$ of the problem
\[	
	\min_{\mu \in \X} \mathscr{E}(\mu).
\]
(ii) If, in particular, supp$(\lambda^+)$ is an atom, then there exists a unique minimizer $\bar \mu\in \X$ of $\mathscr{E}$.
\end{theorem}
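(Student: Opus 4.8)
The plan is to prove existence via the direct method of the calculus of variations, so I first need to establish that $\mathscr{E}$ is well-defined, bounded below, and lower semicontinuous with respect to a topology in which sublevel sets are compact. The natural topology on $\X$ is narrow (weak) convergence, but $W_2$ is only lower semicontinuous under narrow convergence, not continuous. To handle the signed measure I split $\mathscr{E}(\mu)=\int W_2^2(\mu,\nu)\,\d\lambda^+(\nu)-\int W_2^2(\mu,\nu)\,\d\lambda^-(\nu)$. Using the elementary inequality $W_2^2(\mu,\nu)\le 2\,m_2^2(\mu)+2\,m_2^2(\nu)$ together with the integrability hypothesis \eqref{eq:main_hyp}, both integrals are finite, so $\mathscr{E}$ takes real values. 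The key algebraic step is to expand the squared Wasserstein distance and exploit that the total mass is $+1$: writing $W_2^2(\mu,\nu)=m_2^2(\mu)+m_2^2(\nu)-2\langle\mu,\nu\rangle$ where $\langle\mu,\nu\rangle$ denotes the optimal correlation $\sup_{\g\in\Gamma(\mu,\nu)}\int\langle x,y\rangle\,\d\g$, the quadratic term in $\mu$ contributes $\lambda(\X)\,m_2^2(\mu)=m_2^2(\mu)$, which will provide the needed coercivity.

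Concretely, I would show the lower bound $\mathscr{E}(\mu)\ge \tfrac12 m_2^2(\mu)-C$ for a constant $C$ depending only on $\lambda$ and its moments. To obtain this, I bound the negative contribution $\int W_2^2(\mu,\nu)\,\d\lambda^-(\nu)$ using $W_2^2(\mu,\nu)\le (1+\e)m_2^2(\mu)/\e+\cdots$ type Young inequalities, or more cleanly by writing $-2\langle\mu,\nu\rangle\ge -\eta\, m_2^2(\mu)-\eta^{-1}m_2^2(\nu)$ and choosing $\eta$ small enough relative to $\lambda^-(\X)=\lambda^+(\X)-1$. This coercivity estimate forces minimizing sequences to have uniformly bounded second moments; by Prokhorov's theorem (the uniform moment bound gives tightness) a minimizing sequence $\mu_k$ admits a narrowly convergent subsequence $\mu_k\weakto\bar\mu$ with $m_2^2(\bar\mu)<\infty$.

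The main obstacle is lower semicontinuity under the sign change. For the positive part, $\nu\mapsto W_2^2(\mu_k,\nu)$ is narrowly lower semicontinuous, so Fatou-type arguments give $\liminf_k\int W_2^2(\mu_k,\nu)\,\d\lambda^+(\nu)\ge\int W_2^2(\bar\mu,\nu)\,\d\lambda^+(\nu)$. But the negative term requires \emph{upper} semicontinuity of $\mu\mapsto\int W_2^2(\mu,\nu)\,\d\lambda^-(\nu)$, which fails in general for $W_2$. The remedy is to pass through the expanded form: the troublesome terms are the correlation integrals $\int\langle\mu,\nu\rangle\,\d\lambda^\pm(\nu)$, and I would argue that narrow convergence $\mu_k\weakto\bar\mu$ together with the uniform second-moment bound upgrades to convergence in $W_2$ of a further subsequence (or that the correlation functional is continuous), so the potentially discontinuous parts cancel or converge. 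After passing to the limit one concludes $\mathscr{E}(\bar\mu)\le\liminf_k\mathscr{E}(\mu_k)=\inf\mathscr{E}$, establishing (i).

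For the uniqueness statement (ii), I would exploit the structure when $\mathrm{supp}(\lambda^+)$ is a single atom, say $\lambda^+=a\,\delta_{\nu_0}$ with $a=1+\lambda^-(\X)$. Then $\mathscr{E}(\mu)=a\,W_2^2(\mu,\nu_0)-\int W_2^2(\mu,\nu)\,\d\lambda^-(\nu)$. The decisive tool is strong convexity of $\mu\mapsto W_2^2(\mu,\nu_0)$ along generalized geodesics with base $\nu_0$: Along such a geodesic $\mu_t=((1-t)T_0+t\,T_1)_\#\nu_0$, the map $t\mapsto W_2^2(\mu_t,\nu_0)$ is $1$-convex while each $t\mapsto W_2^2(\mu_t,\nu)$ is convex (not necessarily strictly), so $t\mapsto\mathscr{E}(\mu_t)$ inherits strong convexity provided the coefficient $a$ of the strongly convex term dominates—which holds because the subtracted functional is merely convex along these geodesics. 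Hence $\mathscr{E}$ is strictly convex along generalized geodesics anchored at $\nu_0$, and two distinct minimizers would yield a strictly smaller value at the midpoint, a contradiction. The subtle point I expect to verify carefully is the convexity of $\mu\mapsto W_2^2(\mu,\nu)$ along geodesics based at $\nu_0$ rather than at $\nu$, which is exactly the notion of convexity along generalized geodesics for which the base measure must be fixed consistently.
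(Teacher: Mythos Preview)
Your overall architecture for (i) matches the paper's: expand $W_2^2(\mu,\nu)=m_2^2(\mu)+m_2^2(\nu)+\mathcal R(\nu,\mu)$ with $\mathcal R(\nu,\mu)=-2\sup_{\gamma\in\Gamma(\mu,\nu)}\int x\cdot y\,\d\gamma$, extract coercivity from the $m_2^2(\mu)$ term, and pass to the limit. However, two of the steps you leave loose are exactly the ones that carry the weight of the proof.

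First, the claim that narrow convergence together with a uniform second-moment bound ``upgrades to convergence in $W_2$ of a further subsequence'' is false: on $\R$, take $\mu_k=(1-1/k)\delta_0+(1/k)\delta_{\sqrt k}$, which converges narrowly to $\delta_0$ with $m_2^2(\mu_k)\equiv 1$ but $W_2(\mu_k,\delta_0)=1$. So the only viable route is your parenthetical alternative, continuity of the correlation functional $\mu\mapsto\mathcal R(\nu,\mu)$. This is what the paper proves (Lemma~\ref{lemma:Rcont}), but it is not a triviality: it requires showing that optimal plans $\gamma_k\in\Gamma_{\rm o}(\nu,\mu_k)$ converge in the strong--weak topology $\P_2^{sw}(H_s\times H_w)$ of \cite{naldi-savare}, so that the cost $(x,y)\mapsto -2x\cdot y$ (which lies in $C_2^{sw}$) passes to the limit. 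Second, in an infinite-dimensional $H$, Prokhorov does not give you compactness from a moment bound, since closed balls are not compact; the paper works instead in $\P_2^w(H)$, where boundedness of $m_2^2$ is equivalent to relative sequential compactness (Corollary~\ref{cor:NS}). Without this topology you have no limit point of your minimizing sequence.

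For (ii) there is a genuine error of sign. Along a generalized geodesic $\gamma_t$ with basepoint $\nu_0$, the function $t\mapsto W_2^2(\gamma_t,\nu)$ is \emph{not} convex when $\nu\neq\nu_0$; what one obtains (via the curve extension lemma, since the couplings from $\gamma_0,\gamma_1$ to $\nu$ induced by $\boldsymbol\nu_t$ are not optimal) is only the \emph{lower} bound
\[
W_2^2(\gamma_t,\nu)\ \ge\ (1-t)W_2^2(\gamma_0,\nu)+tW_2^2(\gamma_1,\nu)-t(1-t)\!\int|x_0-x_1|^2\,\d\boldsymbol\gamma.
\]
Thus $-W_2^2(\cdot,\nu)$ is merely $(-1)$-convex along these curves, and your heuristic ``strongly convex minus convex stays strongly convex because the coefficient $a$ dominates'' is both based on a false premise and invalid as stated (subtracting a convex function can destroy convexity entirely). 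The correct mechanism, as in the paper, is an exact cancellation: the positive term contributes $(1+\alpha)$-convexity with correction $-(1+\alpha)t(1-t)\int|x_0-x_1|^2\d\boldsymbol\gamma$, while integrating the negative terms against $\sigma$ contributes the opposite correction $+\alpha\,t(1-t)\int|x_0-x_1|^2\d\boldsymbol\gamma$ (since $\sigma(\X)=\alpha$), leaving net $1$-convexity. Note also that writing $\mu_t=((1-t)T_0+tT_1)_\#\nu_0$ presupposes optimal \emph{maps} from $\nu_0$, which need not exist; the argument must be run at the level of plans.
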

It should be noticed that uniqueness is false, in general, in the classical barycenter problem with positive weights in $H=\R^d$. Moreover, the bound on the quadratic moments of $|\lambda|$ in \eqref{eq:main_hyp} is the minimal assumption that ensures that $\mathcal{E}$ is finite on $\X$, but finite-energy sequences are not relatively compact in $\X$ if $H$ is infinite-dimensional. In order to recover the compactness of a minimizing sequence, which we need for the direct method of the calculus of variations, we make use of the weak topology on $\X$, introduced in \cite{naldi-savare}, which we denote $\P_2^w(H)$ - see Section \ref{ssec:weak}. The same framework is also crucial in the proof of the lower-semicontinuity of $\mathcal{E}$.

Let $(\lambda_k)_{k\in\N}$ be a sequence of signed measures on $\X$, such that 
\begin{equation}
\label{eq:hypotheses}
	\lambda_k(\X)=1,\qquad \sup_{k \in \N}M_2^2(|\lambda_k|)=\sup_{k \in \N} \int_{\X}m_2^2(\nu)\, \d|\lambda_k|(\nu)<+\infty.
\end{equation}
Since Theorem \ref{th:newmain} asserts that each $\lambda_k$ admits a generalized Wasserstein barycenter, it is natural to ask whether a sequence of barycenters of $\lambda_k$ converges to a barycenter of $\lambda$, if $\lambda_k \to \lambda$, in a suitable sense. We can provide an answer to this question in the framework of $\Gamma$-convergence (see, e.g., \cite{dalmaso}) on $P_2^w(H)$.
\begin{theorem}
\label{th:gamma}
Let $(\lambda_k)_{k \in \N}$ and $\lambda$ be signed measures on $\X$, satisfying \eqref{eq:hypotheses}. Assume that
\begin{align}
	\lim_{k\to \infty} \int_{\X}\varphi(\nu)\,\d\lambda^{\pm}_k(\nu) &= \int_{\X}\varphi(\nu)\,\d\lambda^{\pm}(\nu) \qquad \forall \varphi \in C^0_b(X),\label{hyp:wconv}\\
	\lim_{k\to \infty} \int_{\X}m_2^2(\nu)\,\d |\lambda_k|(\nu) &= \int_{\X}m_2^2(\nu)\,\d |\lambda|(\nu).\label{hyp:moment}
\end{align}
Let $\mathcal{E}_k:\X \to \R$ be defined as
\[
	\mathcal{E}_k(\mu):= \int_{\X} W_2^2(\nu,\mu)\d\lambda_k(\nu).
\]
Then

(i) $\mathcal{E}_k$ $\Gamma$-converges to $\mathcal{E}$ in $\X$, with respect to the convergence induced by the 2-Wasserstein distance, and also with respect to the weak topology of $\P_2^w(H)$;

(ii) If $\mu_k$  is a minimizer of $\mathcal{E}_k$ for all $k\in \N$, the sequence $(\mu_k)_{k\in \N}$ is precompact with respect to the weak topology of $\P_2^w(H)$;

(iii) As a consequence of (i) and (ii),  if $\overline{\mu} \in \X$ is a limit point in $\P_2^w(H)$ of a subsequence of minimizers $(\mu_{k_j})$, then
	\[
		\lim_{j \to \infty} \mathcal{E}_{k_j}(\mu_{k_j}) = \mathcal{E}(\overline{\mu})=\min_{\mu \in \X}\mathcal{E}(\mu).
	\] 
\end{theorem}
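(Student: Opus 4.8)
The plan is to establish the three parts of Theorem \ref{th:gamma} by the standard machinery of $\Gamma$-convergence, exploiting the compactness and lower-semicontinuity properties of the weak topology $\P_2^w(H)$ that were already used in the proof of Theorem \ref{th:newmain}. For part (i), I must verify the two defining inequalities of $\Gamma$-convergence: the $\liminf$ inequality, stating that for every $\mu \in \X$ and every sequence $\mu_k \to \mu$ (in the relevant topology) one has $\mathcal{E}(\mu) \le \liminf_k \mathcal{E}_k(\mu_k)$, and the existence of a \emph{recovery sequence}, i.e.\ for every $\mu$ a sequence $\mu_k \to \mu$ with $\limsup_k \mathcal{E}_k(\mu_k) \le \mathcal{E}(\mu)$. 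Because $\mathcal{E}_k(\mu) = \int W_2^2(\nu,\mu)\,\d\lambda_k^+(\nu) - \int W_2^2(\nu,\mu)\,\d\lambda_k^-(\nu)$, I would treat the positive and negative parts separately, using hypothesis \eqref{hyp:wconv} for weak convergence of $\lambda_k^\pm$ and hypothesis \eqref{hyp:moment} to control the second moments uniformly.

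The main technical obstacle is that the integrand $\nu \mapsto W_2^2(\nu,\mu)$ is not bounded and not (boundedly) continuous on $\X$, so I cannot apply \eqref{hyp:wconv} directly; $W_2^2(\nu,\mu)$ grows like $m_2^2(\nu)$. The natural remedy is a truncation argument: for $R>0$ set $\varphi_R(\nu) = \min\{W_2^2(\nu,\mu),R\}$, which is bounded and continuous, apply \eqref{hyp:wconv} to $\varphi_R$, and then control the tail $\int (W_2^2(\nu,\mu) - \varphi_R(\nu))\,\d|\lambda_k|(\nu)$ uniformly in $k$ using the uniform integrability of $m_2^2$ furnished by the \emph{convergence} (not mere boundedness) of moments in \eqref{hyp:moment}. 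This is the step where the two hypotheses genuinely interact: \eqref{hyp:moment} upgrades the weak convergence to convergence against the unbounded test function $m_2^2(\cdot)$, hence, together with the elementary bound $W_2^2(\nu,\mu) \le 2m_2^2(\nu) + 2m_2^2(\mu)$ and the fact that $\mu_k \to \mu$ along the recovery/competitor sequences, yields convergence against $W_2^2(\cdot,\mu)$. For the recovery sequence in part (i) the simplest choice is the constant sequence $\mu_k = \mu$, so that the recovery inequality reduces precisely to showing $\mathcal{E}_k(\mu) \to \mathcal{E}(\mu)$, which the truncation argument delivers; the $\liminf$ inequality then additionally requires the joint lower-semicontinuity of $(\nu,\mu)\mapsto W_2^2(\nu,\mu)$ under weak convergence of $\mu_k$, which is exactly the property established earlier for Theorem \ref{th:newmain}.

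For part (ii), I would derive a coercivity/equicompactness estimate: since each $\mu_k$ minimizes $\mathcal{E}_k$, comparison with a fixed competitor (for instance $\delta_0 \in \X$) gives $\mathcal{E}_k(\mu_k) \le \mathcal{E}_k(\delta_0)$, and the latter is bounded uniformly in $k$ by \eqref{eq:hypotheses} (or \eqref{hyp:moment}). Combining this bound with the structure $\mathcal{E}_k(\mu_k) \ge \int W_2^2(\nu,\mu_k)\,\d\lambda_k^+(\nu) - \int W_2^2(\nu,\mu_k)\,\d\lambda_k^-(\nu)$ and using $\lambda_k^+(\X) \ge \lambda_k(\X) = 1$, I expect to extract a uniform bound on $m_2^2(\mu_k)$, which by Prokhorov's theorem (and the definition of $\P_2^w(H)$ recalled in Section \ref{ssec:weak}) yields precompactness of $(\mu_k)$ in the weak topology. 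Part (iii) is then the fundamental theorem of $\Gamma$-convergence: precompactness of minimizers from (ii) together with $\Gamma$-convergence from (i) guarantees that every weak limit point $\overline{\mu}$ of a subsequence $(\mu_{k_j})$ is a minimizer of the $\Gamma$-limit $\mathcal{E}$, and that the minimal values converge, $\mathcal{E}_{k_j}(\mu_{k_j}) \to \mathcal{E}(\overline{\mu}) = \min_{\mu} \mathcal{E}(\mu)$; I would invoke the standard statement from \cite{dalmaso}, checking only that its hypotheses (equicoercivity plus $\Gamma$-convergence on a topology with the required countability/compactness features) are met in $\P_2^w(H)$.
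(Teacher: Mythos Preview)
Your recovery sequence and parts (ii)--(iii) match the paper's argument: the constant sequence $\mu_k=\mu$, the truncation/uniform-integrability proof that $\mathcal E_k(\mu)\to\mathcal E(\mu)$ for fixed $\mu$, and equicoercivity via the uniform lower bound (the paper applies Lemma \ref{lemma:lower} with the uniformly bounded constants $M(|\lambda_k|)$, $M_2^2(|\lambda_k|)$). Part (iii) is then the fundamental theorem of $\Gamma$-convergence, exactly as you say.

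The gap is in the $\Gamma$-liminf. Splitting $\mathcal E_k(\mu_k) = \int W_2^2(\nu,\mu_k)\,\d\lambda_k^+ - \int W_2^2(\nu,\mu_k)\,\d\lambda_k^-$ and invoking ``joint lower-semicontinuity of $W_2^2$'' cannot close the estimate: the $\lambda_k^-$ term would require \emph{upper} semicontinuity of $\mu\mapsto W_2^2(\nu,\mu)$ along weak limits in $\P_2^w(H)$, and this fails already for $\nu=\delta_0$, since $m_2^2(\mu_k)=W_2^2(\delta_0,\mu_k)$ can strictly drop. The paper avoids this by a different decomposition,
\[
\mathcal E_k(\mu_k)=m_2^2(\mu_k)+\int_{\X} m_2^2(\nu)\,\d\lambda_k(\nu)+\int_{\X} \mathcal R(\nu,\mu_k)\,\d\lambda_k(\nu),
\]
which isolates the single one-sided term $m_2^2(\mu_k)$; crucially it carries total weight $\lambda_k(\X)=1>0$, so weak lower semicontinuity points the right way. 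The remaining $\mathcal R$-integral is shown to \emph{converge} (not merely $\liminf$) to $\int\mathcal R(\nu,\mu)\,\d\lambda$, and for this the paper brings in a tool you do not mention: Skorohod's representation theorem applied to $\lambda_k^\pm/\lambda_k^\pm(\X)$, turning weak convergence of $\lambda_k^\pm$ into a.e.\ $W_2$-convergence of maps $\xi_k:[0,1]\to X$. Then the strong--weak continuity of $\mathcal R$ (Lemma \ref{lemma:Rcont}) gives pointwise convergence of $t\mapsto\mathcal R(\xi_k(t),\mu_k)$, and domination by $m_2^2(\xi_k(t))+m_2^2(\mu_k)$, together with the $L^1(0,1)$-convergence of $t\mapsto m_2^2(\xi_k(t))$ forced by \eqref{hyp:moment}, yields Lemma \ref{lemma:gammaR}. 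That device is precisely what handles the simultaneous variation of $\mu_k$ and $\lambda_k$ that your truncation argument, as stated, does not address.
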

We remark that $\Gamma$-convergence with respect to the weak and the strong topologies is also known as Mosco-convergence (see \cite{mosco}). 

In the case $H = \R$, we provide an explicit characterization of the minimizer. Indeed, in the 1-dimensional case any probability measure $\mu\in \P_2(\R)$ can be represented by its quantile function (see, e.g., \cite[Theorem 2.18]{villani_TOT}), i.e., the pseudo-inverse $X_\mu$ of its distribution function $F_\mu$:
\begin{align*}
	F_\mu(x)&:= \mu((-\infty,x]), & &\forall\,x\in \R,\\
	X_\mu(w)&:=\inf \{ x: F_\mu(x)>w\}, & &\forall\,w\in (0,1).
\end{align*}
The map $\mu \mapsto X_\mu$ is an isometry between $\P_2(\R)$, endowed with the 2-Wasserstein distance, and the convex cone $\K$ of nondecreasing functions in the Hilbert space $L^2(0,1)$. We denote by $P_\K$ the orthogonal projection operator, $P_\K:L^2(0,1)\to \K$.
Our main result, in the 1-dimensional case, is the following.
\begin{theorem}
\label{th:intro}
Let $\lambda$ be a signed measure on $\P_2(\R)$, satisfying
\begin{equation}
\label{eq:main_hyp_1d}
	\lambda(\P_2(\R))=1\qquad \text{and}\qquad  \int_{\P_2(\R)}  m_2^2(\nu)\d|\lambda|(\nu)<\infty.
\end{equation}
Then there exists a unique solution $\bar \mu \in \P_2(\R)$ of
\[	
	\inf_{\mu \in \P_2(\R)} \int_{\P_2(\R)} W^2_2(\mu,\nu)\d\lambda(\nu),
\]
and it is characterized by
\begin{equation}
\label{eq:charact}
	X_{\bar \mu} = P_\K\left( \int_{\P_2(\R)} X_{\nu} \d\lambda(\nu)\right),
\end{equation}
where $X_{\nu}\in \K$ is the pseudo-inverse function of (the distribution function of) $\nu$.
\end{theorem}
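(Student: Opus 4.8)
The plan is to transport the whole problem into the Hilbert space $L^2(0,1)$ via the quantile isometry $\mu\mapsto X_\mu$, where it collapses to a textbook projection onto a closed convex set. Writing $\|\cdot\|$ and $\langle\cdot,\cdot\rangle$ for the norm and inner product of $L^2(0,1)$, the isometry gives $W_2^2(\mu,\nu)=\|X_\mu-X_\nu\|^2$, and since $\|X_\nu\|^2=m_2^2(\nu)$, the hypothesis \eqref{eq:main_hyp_1d} reads $\int\|X_\nu\|^2\,\d|\lambda|(\nu)<\infty$. Setting $f:=X_\mu\in\K$ and $g_\nu:=X_\nu\in\K$, I would first expand
\[
	\|f-g_\nu\|^2=\|f\|^2-2\langle f,g_\nu\rangle+\|g_\nu\|^2
\]
and integrate in $\nu$ against $\lambda$. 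Because $\lambda(\P_2(\R))=1$, the first term contributes exactly $\|f\|^2$; the last term is a finite constant independent of $f$ by \eqref{eq:main_hyp_1d}; so the objective reduces, up to an additive constant, to $\|f\|^2-2\langle f,\bar g\rangle$, where $\bar g:=\int_{\P_2(\R)} X_\nu\,\d\lambda(\nu)$.

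Before proceeding I would verify that $\bar g$ is a well-defined element of $L^2(0,1)$. Writing $\lambda=\lambda^+-\lambda^-$, each of $\int X_\nu\,\d\lambda^\pm(\nu)$ is a Bochner integral in $L^2(0,1)$, whose absolute convergence follows from Cauchy--Schwarz applied to the finite measure $|\lambda|$:
\[
	\int \|X_\nu\|\,\d|\lambda|(\nu)\le \Big(\int\|X_\nu\|^2\,\d|\lambda|(\nu)\Big)^{1/2}\big(|\lambda|(\P_2(\R))\big)^{1/2}<\infty.
\]
The same bound legitimizes the interchange $\int\langle f,X_\nu\rangle\,\d\lambda(\nu)=\langle f,\bar g\rangle$ used above, which is just the defining property of $\bar g$.

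With this reduction in hand, minimizing the original functional over $\mu\in\P_2(\R)$ is equivalent, through the isometry, to minimizing $\|f\|^2-2\langle f,\bar g\rangle=\|f-\bar g\|^2-\|\bar g\|^2$ over $f$ in the cone $\K$ of nondecreasing functions, i.e.\ to the orthogonal projection of $\bar g$ onto $\K$. I would then note that $\K$ is a closed, convex subset of $L^2(0,1)$ --- convexity is immediate, and closedness holds because an $L^2$-limit of nondecreasing functions admits a nondecreasing representative --- so the projection theorem yields a unique minimizer $f^\star=P_\K(\bar g)$. Finally, using that $\mu\mapsto X_\mu$ is a bijective isometry onto $\K$, I would pull $f^\star$ back to the unique $\bar\mu\in\P_2(\R)$ with $X_{\bar\mu}=f^\star=P_\K\big(\int X_\nu\,\d\lambda(\nu)\big)$, which is exactly \eqref{eq:charact}.

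Once the translation to $L^2(0,1)$ is set up, the argument is essentially routine, and indeed it is this translation, rather than any single estimate, that does all the work. The only points requiring genuine care are the well-posedness of the vector-valued integral $\bar g$ against a \emph{signed} measure (handled by the moment hypothesis and Cauchy--Schwarz above) and the surjectivity of the quantile map onto $\K$, which guarantees that the abstract minimizer $P_\K(\bar g)$ is truly the quantile function of a probability measure in $\P_2(\R)$. I therefore expect no serious obstacle, the only subtlety being the measurability and integrability bookkeeping for $\bar g$.
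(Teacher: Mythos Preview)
Your proposal is correct and follows essentially the same route as the paper: reformulate the problem in $L^2(0,1)$ via the quantile isometry, expand the square using $\lambda(\P_2(\R))=1$ to reduce to $\|f-\bar g\|^2+\text{const}$, and invoke the Hilbert projection theorem onto the closed convex cone $\K$. The paper packages the computation slightly more abstractly (general Hilbert space and closed convex subset), but the argument and the care taken with the Bochner integral $\bar g$ are the same.
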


\subsection{Literature review and related problems}
\subsubsection*{Wasserstein barycenters} The barycenter of two probability measures, with positive weights adding to 1, is well-known as McCann's interpolation \cite{mc_cann}, while a complete study of the Wasserstein barycenters of $n$ measures in $\R^d$ was done in \cite{agueh-carlier}, where, in particular, it was shown the existence of a minimizer and the uniqueness, in the case where one of the $\nu_i$'s vanishes on small sets. In the present paper we do not require the absolute continuity of one of the measures, but in order to obtain uniqueness we need the positive part of $\lambda$ to be concentrated on a single measure. In the one-dimensional case, if $\lambda$ is concentrated on a discrete set,  if all weights $\lambda_i$ are positive, and all measures $\nu_i$ are nonatomic, our characterization \eqref{eq:charact} reduces to the one-dimensional formula in \cite[Section 6.1]{agueh-carlier}.

The barycenter of a continuous distribution of probability measures on $\R^d$ (also called a \emph{population} barycenter) was first studied in \cite{bigot2012} (published as \cite{bigot2018}), where the authors, exploiting a duality argument, prove existence, uniqueness, and characterize the barycenter for compactly supported measures, and then study the convergence of the empirical barycenter to its population counterpart as the number of measures tends to infinity. In \cite{legouic} existence and consistence of the population barycenter is studied in a general geodesic space. Several applications of Wasserstein barycenters to analysis of data from demography and neuroimaging were given in the context of Fr\'echet regression in \cite{petersen}.

\subsubsection*{Metric extrapolation} The barycenter of two measures $\nu_0,\nu_1 \in \X$ with weights $\alpha>1$ and $\beta=1-\alpha<0$, that is
\begin{equation}
\label{eq:main}
	\min_{\mu\in \X} \alpha W_2^2(\mu,\nu_0) +\beta W_2^2(\mu,\nu_1),
\end{equation}
was studied in \cite{matthes-plazotta} and \cite{gallouet2024} in the context of metric extrapolation for the Backward Differentiation Formula of order 2 (BDF2). This connection requires some explanation. Given a metric space $(X,d)$ and a functional $\Phi:X\to \R\cup\{+\infty\}$, the gradient flow of $\Phi$ with respect to the metric $d$ is formally given by the solutions $u:[0,T]\to X$ of
\[    
	\frac{\d}{\d t} u(t) = -\nabla \Phi(u(t)),    
\]
or by its time-discrete equivalent $(u^k_\tau)\in X$
\begin{equation}
\label{eq:euler}
    \frac{u^k_\tau-u^{k-1}_\tau}{\tau} \approx -\nabla \Phi(u^k_\tau),
\end{equation}
for a small time-step $\tau>0$. A rigorous metric framework for the derivatives that appear in these equations was established in \cite{ambrosio-gigli-savare}. The minimizing movement scheme (also referred to as Implicit Euler method or Jordan-Kinderlehrer-Otto (JKO) stepping) provides, under suitable assumptions, a discrete approximation of the gradient flow equation, by solving a sequence of minimization problems defined by
\[    	
	u^k = \argmin_{w \in X}\left\{ \frac{1}{2\tau}d^2(u^{k-1}_\tau,w) +\Phi(w)\right\}.
\]
The BDF2 scheme is a second-order discretization scheme, well-known for ODEs in $\R^d$ since the 1950's (see, e.g., \cite{dahlquist1956}), which was recently proposed to approximate gradient flows in metric spaces \cite{matthes-plazotta}. In the Euclidean setting, the time-discrete  approximation \eqref{eq:euler} is substituted by
\[    
	\frac{3u^k_\tau - 4u^{k-1}_\tau +u^{k-2}_\tau}{2\tau} \approx -\nabla \Phi(u^k_\tau),
\]
which leads, in a JKO step, to the minimization of
\[    
	\Psi(\tau,u^{k-2}_\tau, u^{k-1}_\tau;w):= \frac{1}{\tau}d^2(u^{k-1}_\tau,w) -\frac{1}{4\tau}d^2(u^{k-2}_\tau,w) +\Phi(w).
\]
In \cite{matthes-plazotta} several examples of metric spaces $(X,d)$ are given, such that a sequence of piecewise-constant interpolations of the discrete solutions $(u^k_\tau)$ converges, locally uniformly in time, to a solution $u\in {\rm AC}^2([0,\infty),X)$ of the gradient flow of $\Phi$, in the sense of the following Evolutionary Variational Inequality (EVI)
\[
    \frac12d^2(u(t),w)-\frac12d^2(u(s),w) \leq \int_s^t \left[ \Phi(w) -\Phi(u(r)) -\frac{\lambda}{2}d^2(u(r),w)\right]\d r,
\]
for all $0\leq s<t$, where $\lambda \in \R$ is the $\lambda$-convexity modulus of $\Psi$ (\cite[Theorem 5.1]{matthes-plazotta}). Among examples given in \cite{matthes-plazotta}, we mention the following ones.
\begin{itemize}
	\item[(a)] In the case where $X$ is a Hilbert space, with the distance induced by its norm, the distance part of the JKO functional $\Psi$ is convex on straight segments and it is minimized by the weighted average of $u^{k-1}_\tau$ and $u^{k-2}_\tau$, as in a classical barycenter problem with positive coefficients.
	\item[(b)] In the case $(X,d)=(\X,W_2)$, minimizing the distance part of the JKO functional $\Psi$ is a Wasserstein barycenter problem like \eqref{eq:main}, with two assigned measures (corresponding to $u^{k-2}_\tau$ and $u^{k-1}_\tau$) and real coefficients. It should be remarked that the negative sign of one of the weights is crucial for the $\lambda$-convexity of $\Psi$ and thus for the existence and uniqueness of a minimizer. Indeed, we are going to exploit exactly \cite[Theorem 3.4]{matthes-plazotta} for the uniqueness part of our Theorem \ref{th:newmain}. 
\end{itemize}
Finally, we mention that two different convex formulations for \eqref{eq:main} were studied in \cite{gallouet2025}, together with an efficient numerical scheme to compute the minimizers.

\subsubsection*{Sticky particles} In \cite[Remarks 4.6 and 4.13]{gallouet2024}, an interesting connection was pointed out, between metric extrapolation and the one-dimensional pressureless Euler system
\begin{equation}
\label{eq:sticky}
\left\{
    \begin{array}{r}
        \partial_t \rho + \partial_x(\rho v) =0 \\
        \partial_t (\rho v) + \partial_x(\rho v^2) =0 
    \end{array}
  \right.  
\end{equation}
in $\R \times (0,+\infty)$, with initial density and velocity conditions $\rho_{|t=0}=\rho_0$, $v_{|t=0}=v_0$, for the evolution of a system of particles that share their trajectories after a collision (also called \emph{``sticky particle system"} (SPS)) (see also \cite{brenier}). Here, we exploit the characterization of the solutions to the (SPS) given in \cite{natile-savare}, in order to show a direct connection with the minimizer of a generalized Wasserstein barycenter functional. Let $(\rho_t,\rho_tv_t)$ be the solution of the (SPS) at time $t>0$, corresponding to a discrete initial datum
\[
	\rho_0 = \sum_{i=1}^N m_i \delta_{x_i},\qquad \rho_0v_0 = \sum_{i=1}^N m_i v_i \delta_{x_i},\qquad m_i>0,\ \sum_{i=1}^N m_i=1,\ x_i,v_i\in \R.
\]
Denoting $X_0$ (resp. $X_t$) the pseudo-inverse function of $\rho_0$ (resp. $\rho_t$), by \cite[Theorem 2.6 - II]{natile-savare} it holds
\begin{equation}
\label{eq:sps}
    X_t = P_\K(X_0+tV_0),
\end{equation}
where $\K$, as above, is the convex cone of nondecreasing functions in $L^2(0,1)$,   $P_\K$ is the orthogonal projection operator on $\K$, and $V_0$ is the piecewise-constant function such that $V_0(x)=v_i$ if $X_0(x)=x_i$. Let $\delta>0$ be the first collision time of the evolution, then, for all $0<s\leq \delta$
\begin{equation*}
    X_s = P_\K(X_0+sV_0) = X_0 +sV_0,
\end{equation*}
and owing to \eqref{eq:sps}, for all $t>s$
\begin{align*}
    X_t &= P_\K (X_0 +tV_0) \\
        &= P_\K \left(X_0 +t\frac{X_s-X_0}{s}\right) \\
        &= P_\K \left(\left(1-\frac{t}{s}\right)X_0 +\frac{t}{s}X_s\right).
\end{align*}
By \eqref{eq:charact}, we immediately see that the solution $\rho_t = (X_t)_\sharp \mathcal L^1\llcorner_{(0,1)}$ of the sticky particle system \eqref{eq:sticky} at time $t$ is also the minimizer of the generalized barycenter functional
\[
    \mathscr{E}(\rho)= \left(1-\frac{t}{s}\right)W_2^2(\rho_0,\rho) +\frac{t}{s}W_2^2(\rho_s,\rho),
\]
where the fixed measures are the initial density $\rho_0$ of the (SPS), with negative weight $1-t/s$ and the solution of the system at time $s$, that is, $\rho_s$, with positive weight $t/s$.

\subsection{Plan of the paper}
In Section \ref{sec:setting}, we recall the definitions and the main results we need concerning the Wasserstein distance, the weak topologies in spaces of probability measures, and signed measures. In Section \ref{sec:one} we study the one-dimensional case. After recalling Hoeffding's Fr\'echet's isometry, we state and prove the result concerning existence, uniqueness, and characterization of generalized barycenters for probability measures on the real line. In Subsection \ref{ssec:stable}, we provide a stability result and in Subsection \ref{ssec:gauss} an example where the generalized barycenter between one-dimensional Gaussian distributions is not a Gaussian. In Section \ref{sec:hilbert} we prove the existence part of Theorem \ref{th:newmain}, for measures on a separable Hilbert space, while in Section \ref{sec:unique} we prove uniqueness, in the case where $\lambda^+$ is concentrated on one single measure. In Subsection \ref{ssec:example}, we provide an example of non-uniqueness. Finally, in Section \ref{sec:gamma}, we prove Theorem \ref{th:gamma}.

\section{Setting and notation}
\label{sec:setting}
We collect here the minimal definitions needed to state our problem. For an in-depth treatise on optimal transport and Wasserstein distances we refer to the textbooks \cite{ambrosio-gigli-savare} and \cite{villani}.

\subsection{Wasserstein distance} We denote by  $H$ a real separable Hilbert space, endowed with the scalar product `\ $\cdot$\ ' and norm $|u|=\sqrt{u\cdot u}$. $\P(H)$ is the space of Borel probability measures $\mu$ on $H$ and $\X$ the subspace of measures $\mu$  with finite second moments: 
\begin{equation}
\label{def:m2}
	m_2^2(\mu):=\int_H |x|^2\d\mu(x) <+\infty.
\end{equation}
Given the metric spaces $(X_1,d_1)$ and $(X_2,d_2)$, for a general Borel map $f:X_1 \to X_2$ and a Borel measure $\mu$ on $X_1$, the push-forward measure of $\mu$ through $f$ is defined as
\[
	f_\sharp \mu (A) = \mu(f^{-1}(A)),\quad \text{for every Borel set }A\subseteq X_2.
\]
For $i=1,2$, let $\pi^i:H \times H \to H$ denote the projection operator on the $i^{th}$ variable, $\pi^i(x_1,x_2)=x_i$. Given two measures $\mu,\nu \in \X$, the set of admissible transport plans between $\mu$  and $\nu$ is
\[
	\Gamma(\mu,\nu)=\left\{ \gamma \in \P(H \times H) : \pi^1_\sharp \gamma = \mu,\ \pi^2_\sharp \gamma = \nu\right\}.
\]
The Wasserstein-Rubinstein-Kantorovich distance of order 2 between two measures $\mu,\nu \in \X$ is defined as
\[
	W_2(\mu,\nu) = \min\left\{ \left( \int_{H \times H} |x-y|^2 \d\gamma(x,y)\right)^{\frac12}  : \gamma \in \Gamma(\mu,\nu)\right\}.
\]
We remark that $W_2$ satisfies the axioms of a distance on $\X$: $W_2(\mu,\nu)=0$ if and only if $\mu=\nu$, and $W_2$ satisfies the triangle inequality (symmetry and nonnegativity follow from the other two properties).

We denote by $\Gamma_{\rm o}(\mu,\nu)$ the set of \emph{optimal transport plans} between $\mu,\nu\in \X$. That is, if $\gamma \in \Gamma_{\rm o}(\mu,\nu)$, then $\gamma \in \Gamma(\mu,\nu)$ and 
\[
	W_2^2(\mu,\nu)= \int_{H \times H}|x-y|^2\d\gamma(x,y).
\]
\begin{definition}[Narrow convergence]
We say that a sequence $(\mu_n)_{n\in \N} \subset \P(H)$ is narrowly convergent to $\mu \in \P(H)$ if
\[
	\lim_{n\to +\infty} \int_{H}f(x)\,\d\mu_n(x) = \int_{H}f(x)\,\d\mu(x)\qquad \forall f \in C_b(H).
\]
\end{definition}
We will need the following Gluing Lemma (\cite{ambrosio-gigli-savare}, Lemma 5.3.2)
\begin{lemma}
\label{lemma:glue}
Let $\nu_0,\nu_1,\nu_2\in \X$ and $\gamma^{0\, i}\in \Gamma(\nu_0,\nu_i)$ for $i=1,2$. Then there exists $\g \in \P\left( H \times H \times H\right)$ such that
\[
	(\pi^0,\pi^i)_\sharp \g =\gamma^{0\, i} \quad \text{for } i=1,2. 
\]
\end{lemma}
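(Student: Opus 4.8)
The plan is to build $\g$ by gluing together the conditional distributions of $\gamma^{01}$ and $\gamma^{02}$ over their common first marginal $\nu_0$. Since $H$ is a separable Hilbert space, hence a Polish space, and each $\gamma^{0i}\in\Gamma(\nu_0,\nu_i)$ is a Borel probability measure on $H\times H$ with first marginal $\nu_0$, the disintegration theorem (see \cite[Theorem 5.3.1]{ambrosio-gigli-savare}) applies: it yields, for $i=1,2$, a Borel family $\{\gamma^{0i}_x\}_{x\in H}\subset \P(H)$, uniquely determined for $\nu_0$-a.e.\ $x$, such that
\[
	\int_{H\times H} f(x,y)\,\d\gamma^{0i}(x,y)=\int_H\left(\int_H f(x,y)\,\d\gamma^{0i}_x(y)\right)\d\nu_0(x)
\]
for every bounded Borel $f$. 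This is the conditional law of the second coordinate given the first.

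Next I would define $\g$ on $H\times H\times H$ by integrating, against $\nu_0$, the product of the two conditionals anchored at the same point in the first slot:
\[
	\int_{H\times H\times H}\varphi\,\d\g:=\int_H\left(\int_H\int_H\varphi(x,y_1,y_2)\,\d\gamma^{01}_x(y_1)\,\d\gamma^{02}_x(y_2)\right)\d\nu_0(x),
\]
for bounded Borel $\varphi$; equivalently $\g=\int_H \delta_x\otimes\gamma^{01}_x\otimes\gamma^{02}_x\,\d\nu_0(x)$. The integrand $x\mapsto \delta_x\otimes\gamma^{01}_x\otimes\gamma^{02}_x$ is a Borel map into $\P(H\times H\times H)$, being a product of Borel measure-valued maps with the continuous map $x\mapsto\delta_x$, so the formula defines a genuine Borel measure with total mass $\int_H 1\,\d\nu_0=1$.

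To verify the marginal conditions, I would test $\g$ against functions depending on only two of the three coordinates. For $(\pi^0,\pi^1)_\sharp\g=\gamma^{01}$, taking $\varphi(x,y_1,y_2)=\phi(x,y_1)$ makes the inner integral over $y_2$ collapse to the total mass of $\gamma^{02}_x$, namely $1$, leaving
\[
	\int_H\int_H\phi(x,y_1)\,\d\gamma^{01}_x(y_1)\,\d\nu_0(x)=\int_{H\times H}\phi\,\d\gamma^{01}.
\]
The identity $(\pi^0,\pi^2)_\sharp\g=\gamma^{02}$ follows symmetrically, with the roles of $y_1$ and $y_2$ exchanged.

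The one delicate point is the existence and Borel measurability of the disintegrations $\{\gamma^{0i}_x\}$, together with the assertion that $x\mapsto\gamma^{01}_x\otimes\gamma^{02}_x$ is itself Borel, so that the defining integral is meaningful. Both facts rest on the Polish structure of $H$: the disintegration theorem supplies the measurable families, and the product of narrowly Borel measure-valued maps is again narrowly Borel. Once these measurability statements are secured, the marginal verification above is a routine Fubini-type manipulation, and no further estimates are required.
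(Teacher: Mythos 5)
Your proof is correct and follows essentially the same route as the paper, which cites \cite[Lemma 5.3.2]{ambrosio-gigli-savare} and sketches precisely this construction: disintegrate both plans with respect to the common marginal $\nu_0$ and set $\g=\int_H \left(\gamma^{0\,1}_x\times\gamma^{0\,2}_x\right)\d\nu_0(x)$. Your write-up simply makes explicit the measurability of the disintegration families and the Fubini-type verification of the marginals, which the paper leaves to the cited reference.
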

An example of such a $\g$ is given by the measure whose disintegration with respect to $\nu$ is 
\[
	\g = \int_{X} \left(\gamma_{\nu_0}^{0\,1}\times \gamma_{\nu_0}^{0\,2}\right) \d\nu_0,
\] 
where $	\gamma^{0\, i} = \int_{\R^d}\gamma^{0\, i}_{\nu_0}\d\nu_0$ are the disintegrations of the transport plans $\gamma^{0\, i}$ with respect to $\nu_0$.

\subsection{Weak topologies}
\label{ssec:weak}
In the proof of the existence of a minimizer for $\mathcal E$ we are going to use the framework of weak and strong-weak topologies of measures described in \cite{naldi-savare}. Let $C_2^w(H)$ be the Banach space of sequentially weakly continuous functions $\zeta: H \to \R$ satisfying
\[
	\lim_{|x| \to \infty}\frac{\zeta(x)}{1+|x|^2}=0,
\]
endowed with the norm
\[
	\| \zeta \|_{C_2^w(H)}= \sup_{x \in H} \frac{\zeta(x)}{1+|x|^2}.
\]
We define $\P_2^w(H)$ to be the topological space of measures in $\X$, endowed with the initial topology $\sigma(\X,C_2^w(H))$, i.e., the coarsest topology such that all functions 
\[
	F_\zeta:\X \to \R,\qquad F_\zeta(\mu):= \int_H \zeta(x)\d\mu(x),
\] 
with $\zeta \in C_2^w(H)$, are continuous.

In order to highlight the role of the different topologies, here we denote by $H_s$ the space $H$, in order to emphasize that we are using the strong topology, and $H_w$ the same space endowed with the weak topology. Let $Z= H_s \times H_w$ and 
\[
	\P_2(Z) := \left\{ \gamma \in \P(Z): \int_Z \left( |x|^2+|y|^2\right)\d \gamma(x,y)<+\infty \right\}.
\]
We consider the space $C^{sw}_2(Z)$ of test functions $\zeta :Z \to \R$ such that:
 \begin{itemize}
    \item $\zeta$ is sequentially continuous with respect to the strong-weak product topology on $Z$;
	\item $\forall \e >0\ \exists A_\epsilon \geq 0\ :\ |\zeta(x,y)| \leq A_\e \left( 1+|x|^2\right) +\e|y|^2.$
 \end{itemize}
We endow $C^{sw}_2(Z)$ with the norm
\[
	\|\zeta\|_{C^{sw}_2(Z)} := \sup_{(x,y)} \frac{|\zeta(x,y)|}{1+|x|^2 +|y|^2}.
\]
We endow $\P_2(Z)$ with the initial topology induced by the functions
\[
	\gamma \mapsto \int_Z \zeta(x,y)\, \d\gamma(x,y),\qquad \zeta \in C_2^{sw}(Z);
\]	
we call $\P_2^{sw}(Z)$ the topological space $(\P_2(Z),\sigma(\P_2(Z),C_2^{sw}(Z)))$. Proposition 3.4 in \cite{naldi-savare}, adapted to our simplified setting, reads as follows.
\begin{proposition}
\label{prop:NS}
A sequence $(\gamma_n)_{n\in \N} \subset \P_2^{sw}(Z)$ and a measure $\gamma \in \P_2^{sw}(Z)$ satisfy
\begin{itemize}
	\item[\rm{(i)}] $\gamma_n \weakto \gamma$ narrowly in $\P(Z)$;
	\item[\rm{(ii)}] $\displaystyle \lim_{n\to \infty} \int_Z |x|^2\, \d\gamma_n(x,y) \to \int_Z |x|^2\, \d \gamma(x,y)$;
	\item[\rm{(iii)}] $\displaystyle \sup_{n \in \N} \int_Z |y|^2\, \d\gamma_n(x,y) < +\infty$, 
\end{itemize}	
if and only if
\[
	\gamma_n \to \gamma\quad  \text{in}\quad \P_2^{sw}(Z).
\]
\end{proposition}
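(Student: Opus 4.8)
The statement is an equivalence between the topological convergence $\gamma_n\to\gamma$ in $\P_2^{sw}(Z)$ --- that is, $\int_Z\zeta\,\d\gamma_n\to\int_Z\zeta\,\d\gamma$ for every $\zeta\in C_2^{sw}(Z)$ --- and the conjunction of (i)--(iii). The plan is to prove the two implications separately. For the implication ``convergence $\Rightarrow$ (i)--(iii)'', the first two items are almost immediate by testing against well-chosen functions. Since every bounded $sw$-continuous function lies in $C_2^{sw}(Z)$ (the growth bound holds with $A_\e=\|f\|_\infty$ for every $\e$), convergence in $\P_2^{sw}(Z)$ yields narrow convergence, i.e. (i); and testing against $\zeta(x,y)=|x|^2$, which belongs to $C_2^{sw}(Z)$ with norm $\le 1$, gives (ii).

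The uniform bound (iii) is more delicate, because no function that genuinely grows like $|y|^2$ belongs to $C_2^{sw}(Z)$, so one cannot simply test against $|y|^2$. Here I would invoke the Banach--Steinhaus theorem: each $\gamma_n$ is a bounded linear functional on the Banach space $C_2^{sw}(Z)$ and the sequence converges pointwise, hence $L:=\sup_n\|\gamma_n\|_{(C_2^{sw})^*}<\infty$. I would then test against the truncations $\zeta_{k,M}(x,y):=\min(|P_k y|^2,M)$, where $P_k$ is the orthogonal projection onto the span of the first $k$ vectors of an orthonormal basis of $H$. Each $\zeta_{k,M}$ is $sw$-continuous, since it depends only on the finite-dimensional, weakly continuous vector $P_k y$, is bounded by $M$, and satisfies $\|\zeta_{k,M}\|\le 1$; therefore $\int_Z\zeta_{k,M}\,\d\gamma_n\le L$ uniformly in $n,k,M$, and monotone convergence as $k,M\to\infty$ yields $\int_Z|y|^2\,\d\gamma_n\le L$ for all $n$, which is precisely (iii).

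For the converse ``(i)--(iii) $\Rightarrow$ convergence'', fix $\zeta\in C_2^{sw}(Z)$ and $\e>0$, and choose $A_\e$ with $|\zeta(x,y)|\le A_\e(1+|x|^2)+\e|y|^2$. The idea is to exploit the sign structure through the two functions
\[
	u^{\pm}(x,y):=A_\e\,(1+|x|^2)+\e\,|y|^2\pm\zeta(x,y),
\]
which are nonnegative and sequentially $sw$-lower semicontinuous, because $A_\e(1+|x|^2)$ is strongly continuous, $\e|y|^2$ is weakly lower semicontinuous, and $\pm\zeta$ is $sw$-continuous. By the classical lower-semicontinuity (Portmanteau) inequality for narrow convergence (see \cite{ambrosio-gigli-savare}), $\liminf_n\int_Z u^{\pm}\,\d\gamma_n\ge\int_Z u^{\pm}\,\d\gamma$. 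Using (ii) to pass to the limit in $\int_Z A_\e(1+|x|^2)\,\d\gamma_n$ and (iii) to confine $\e\int_Z|y|^2\,\d\gamma_n$ to the interval $[0,\e L]$, a short manipulation of these two inequalities gives
\[
	\int_Z\zeta\,\d\gamma-\e L\ \le\ \liminf_{n}\int_Z\zeta\,\d\gamma_n\ \le\ \limsup_{n}\int_Z\zeta\,\d\gamma_n\ \le\ \int_Z\zeta\,\d\gamma+\e L .
\]
Since $\e>0$ is arbitrary and $L$ is fixed, letting $\e\to0$ yields $\int_Z\zeta\,\d\gamma_n\to\int_Z\zeta\,\d\gamma$.

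The main obstacle is the $y$-direction in the converse implication: only the uniform bound (iii) is available, \emph{not} convergence of the $|y|^2$-moments, and the weak topology admits no sharp continuous truncation of $|y|$, so $\zeta$ cannot be reduced to a bounded function by a weakly continuous cutoff. The decomposition into the nonnegative lower-semicontinuous functions $u^{\pm}$ is exactly what circumvents this: it relegates the uncontrolled quadratic $y$-growth to the arbitrarily small coefficient $\e$, turning the missing moment convergence into a vanishing error $O(\e L)$. A secondary technical point to check is the validity of the Portmanteau inequality in the \emph{sequential} $sw$-setting; I would justify it via Prokhorov tightness from (i) together with the metrizability of the weak topology on the balls $\{|y|\le M\}$, on which (iii) concentrates the mass up to an error $O(1/M^2)$.
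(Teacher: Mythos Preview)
The paper does not prove this proposition: it is quoted verbatim as an adaptation of \cite[Proposition~3.4]{naldi-savare} and used as a black box. There is therefore no in-paper argument to compare against.

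Your proof is essentially correct and follows the natural route. In the forward direction, (i) and (ii) are immediate, and your Banach--Steinhaus argument for (iii) via the test functions $\zeta_{k,M}(x,y)=\min(|P_ky|^2,M)$ is valid: they are bounded, depend on $y$ only through finitely many coordinates (hence are genuinely $sw$-continuous), and have $C_2^{sw}$-norm at most $1$, so monotone convergence in $M$ and then in $k$ recovers $\int|y|^2\,\d\gamma_n\le L$. You should make explicit that $(C_2^{sw}(Z),\|\cdot\|_{C_2^{sw}})$ is complete, so that the uniform boundedness principle applies; this is routine.

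In the converse direction, the sandwich with $u^\pm=A_\e(1+|x|^2)+\e|y|^2\pm\zeta$ is the right idea and the manipulation leading to the $\e L$ error is correct. The technical point you flag is genuine and is the only place where care is needed: since $\zeta$ is merely \emph{sequentially} $sw$-continuous and $H_w$ is not metrizable, $u^\pm$ is only sequentially lower semicontinuous on $Z$, and the standard Portmanteau inequality is stated for lower semicontinuous integrands. Your proposed fix---use (iii) and Markov's inequality to localize the mass, uniformly in $n$, on sets $H_s\times\{|y|\le M\}$ where the weak topology is metrizable (so sequential lsc and lsc coincide), apply Portmanteau there, and control the remainder by $O(L/M^2)$---is exactly the right workaround and, once written out, closes the argument.
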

The following result \cite[Corollary 3.6]{naldi-savare} is an immediate consequence of Proposition \ref{prop:NS}.
\begin{corollary}
\label{cor:NS}
It holds
\begin{itemize}
	\item[\rm{(i)}] A sequence $(\mu_n)_{n\in \N}$ converges to $\mu$ in $\P_2^w(H)$ if and only if
	\[
		(\mu_n)_{n\in \N} \text{ converges narrowly in }\P(H_w)\quad \text{and}\quad \sup_{n \in \N}\int_H |x|^2\d\mu_n(x) < \infty.
	\]	
	\item[\rm{(ii)}] A set $\mathcal K \subset \X$ is relatively sequentially compact in $\P_2^w(H)$ if and only if
	\[
		\sup_{\mu \in \mathcal K}\int_H |x|^2\d\mu(x) < \infty.
	\]
\end{itemize}	
\end{corollary}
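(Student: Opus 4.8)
The plan is to reduce both assertions to Proposition \ref{prop:NS} by embedding $\X$ into $\P_2^{sw}(Z)$, with $Z=H_s\times H_w$, through a map that trivializes the strong component. Define $j:H\to Z$ by $j(x)=(0,x)$ and, for $\mu\in\X$, set $\gamma_\mu:=j_\sharp\mu\in\P(Z)$. Since the strong coordinate vanishes $\gamma_\mu$-almost everywhere, we have $\int_Z|x|^2\,\d\gamma_\mu=0$ and $\int_Z|y|^2\,\d\gamma_\mu=m_2^2(\mu)$, so $\gamma_\mu\in\P_2(Z)$ whenever $\mu\in\X$. Along a sequence $(\gamma_{\mu_n})$ the hypothesis (ii) of Proposition \ref{prop:NS} is then automatically satisfied (all strong moments are zero and converge to zero), while hypothesis (iii) reduces exactly to the uniform bound $\sup_n m_2^2(\mu_n)<\infty$.

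First I would check that $j$ identifies convergence in $\P_2^w(H)$ with convergence in $\P_2^{sw}(Z)$ of the images, and narrow convergence in $\P(H_w)$ with narrow convergence in $\P(Z)$ of the images. The key observation is that the restriction map $\zeta\mapsto\zeta(0,\cdot)$ sends $C_2^{sw}(Z)$ onto $C_2^w(H)$: if $\zeta\in C_2^{sw}(Z)$ then $\eta:=\zeta(0,\cdot)$ is sequentially weakly continuous and satisfies $|\eta(y)|\le A_\e+\e|y|^2$ for every $\e>0$, whence $\eta(y)/(1+|y|^2)\to0$ and $\eta\in C_2^w(H)$; conversely any $\eta\in C_2^w(H)$ extends to $\zeta(x,y):=\eta(y)\in C_2^{sw}(Z)$. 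Since $\int_Z\zeta\,\d\gamma_\mu=\int_H\zeta(0,\cdot)\,\d\mu$, the two initial topologies correspond, and testing $C_b(Z)$ against $C_b(H_w)$ gives the analogous identification of the narrow topologies (functions $y\mapsto f(0,y)$ with $f\in C_b(Z)$ range exactly over $C_b(H_w)$). Applying Proposition \ref{prop:NS} to the pair $(\gamma_{\mu_n},\gamma_\mu)$ then yields (i) at once: $\mu_n\to\mu$ in $\P_2^w(H)$ if and only if $\gamma_{\mu_n}\to\gamma_\mu$ in $\P_2^{sw}(Z)$, which by the Proposition holds if and only if $\mu_n\to\mu$ narrowly in $\P(H_w)$ and $\sup_n m_2^2(\mu_n)<\infty$.

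For (ii), the forward implication follows from (i) by contradiction: if $\sup_{\mu\in\K}m_2^2(\mu)=+\infty$, a sequence $\mu_n\in\K$ with $m_2^2(\mu_n)\to\infty$ can admit no $\P_2^w(H)$-convergent subsequence, since by (i) any such subsequence would have uniformly bounded second moments. For the converse I would combine (i) with a Prokhorov-type compactness argument in $\P(H_w)$: a uniform bound $m_2^2(\mu)\le M$ on $\K$ gives, via Chebyshev, $\mu(\{|x|>R\})\le M/R^2$ uniformly in $\mu\in\K$, and the closed balls of $H$ are compact and metrizable in $H_w$; hence any sequence in $\K$ is tight in $\P(H_w)$ and, by Prokhorov's theorem, admits a subsequence $\mu_{n_k}\weakto\mu$ narrowly in $\P(H_w)$. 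Lower semicontinuity of $m_2^2$ under narrow convergence in $\P(H_w)$ gives $m_2^2(\mu)\le M$, so $\mu\in\X$, and (i) upgrades the narrow convergence of $(\mu_{n_k})$ to convergence in $\P_2^w(H)$.

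The only bookkeeping in the above is the verification of the growth conditions in the test-function correspondence, which is routine. The one genuinely non-formal ingredient is the compactness step in part (ii): the fact that second-moment-bounded families are relatively compact in the narrow topology of $\P(H_w)$ relies on the weak compactness and metrizability of bounded sets in the separable Hilbert space $H$ together with Prokhorov's theorem, and does not follow directly from Proposition \ref{prop:NS}. Everything else is a straightforward translation through the embedding $j$.
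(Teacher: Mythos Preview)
Your proposal is correct and is precisely the kind of derivation the paper has in mind: the paper does not give a proof at all, it simply states that Corollary~\ref{cor:NS} is an immediate consequence of Proposition~\ref{prop:NS} and cites \cite[Corollary~3.6]{naldi-savare}. Your embedding $j(x)=(0,x)$, which trivializes the strong component so that hypothesis~(ii) of Proposition~\ref{prop:NS} is automatic and~(iii) becomes the moment bound, is exactly how one unwinds ``immediate consequence'' here; the additional Prokhorov step you flag for part~(ii) is indeed the one ingredient not contained in Proposition~\ref{prop:NS} itself and is handled in the cited reference.
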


\subsection{Signed measures}
We refer to \cite{bogachev} for a comprehensive treatise of measure theory. Here, we will only use the following notions. Let $\mathcal B(X)$ be the Borel $\sigma$-algebra on the metric space $(X,d)$,  we denote by $\mathcal M^s(X)$ the space of countably additive signed measures on $\mathcal B(X)$. By Hahn's decomposition, there exist disjoint sets $\mathcal{A}^+$, $\mathcal{A}^- \in \mathcal B(X)$ such that  $\mathcal{A}^+ \cup \mathcal{A}^- =X$ and for all $B\in \mathcal{B}(X)$
\[	
	\lambda^+(B) :=\lambda(\mathcal{A}^+ \cap B) \geq 0,\qquad \lambda^-(B):=-\lambda(\mathcal{A}^- \cap B) \geq 0.
\]
Then $\lambda = \lambda^+ - \lambda^-$ is the (unique) Hahn-Jordan decomposition of $\lambda$ and the total variation measure is defined as $|\lambda|=\lambda^+ +\lambda^-$. For $f\in L^1(|\lambda|)$, we set
\[	
	\int_X f(x)\d\lambda(x) :=  	\int_X f(x)\d\lambda^+(x)  -	\int_X f(x)\d\lambda^-(x) .
\]
It is immediate to check that
\begin{equation}
\label{eq:signed_ineq}
	\left|\int_X f(x)\d\lambda(x)\right| \leq  	\int_X |f(x)|\d\lambda^+(x)  +	\int_X |f(x)|\d\lambda^-(x) = \int_X |f(x)|\,\d|\lambda|(x).
\end{equation}
We denote the variation of $\lambda$ by
\begin{equation}
\label{def:M}
	M(|\lambda|):= \lambda^+(X) +\lambda^-(X) = |\lambda|(X).
\end{equation}
Finally, for $X=\X$, the squared quadratic moment of $|\lambda|$ is 
\begin{equation}
\label{def:M2}
	M^2_2(|\lambda|):= \int_{\X}  m_2^2(\nu)\d|\lambda|(\nu).
\end{equation}


\section{The one-dimensional case}
\label{sec:one}

We restrict to the one-dimensional setting because in this case there is a standard isometry between $\P_2(\R)$ and the subset $\K$ of (essentially) nondecreasing functions in the Hilbert space of square-integrable functions $L^2(0,1)$. This relation allows us to exploit the properties of scalar products and projections in Hilbert spaces, which are crucial in our proof. We denote by $\| \cdot \|$ the standard norm in $L^2(0,1)$:
\[
	\|f\|=\left(\int_0^1 f^2(x)\d x \right)^{\frac 12}.
\]
Let $\mu\in \P_2(\R)$, the cumulative distribution function of $\mu$ is the function $F_\mu:\R \to [0,1]$
\[
		F_\mu(x):= \mu((-\infty,x])\qquad \forall\,x\in \R.
\]
Its pseudo-inverse (also called: `monotone rearrengement' or `quantile function') is the nondecreasing function $X_\mu: (0,1)\to \R$
\[
	X_\mu(w):=\inf \left\{ x: F_\mu(x)>w \right\}\qquad \forall\,w\in (0,1).
\]
Observe that $X_\mu(F_\mu(x))\geq x$ and that, if $m> F_\mu(x)$, then $X_\mu(m)> x$. Therefore, denoting by $\mathcal{L}^1$ the Lebesgue measure on $(0,1)$:
\[
    (X_{\mu})_\#\mathcal{L}^1 ((-\infty,x]) = \mathcal{L}^1 (X_\mu^{-1}((-\infty,x])) = \mathcal{L}^1((0, F_{\mu}(x)]) = F_\mu(x).
\]
In other words, $(X_{\mu})_\#\mathcal{L}^1 = \mu$.

Moreover, given two measures $\mu, \nu \in \P_2(\R)$, $\gamma = (X_\mu, X_\nu)_\# \mathcal{L}^1$ is the unique monotone transport in $\Gamma(\mu,\nu)$; it follows that $\gamma \in \Gamma_{\rm o}(\mu, \nu)$ (\cite[Theorem 2.9]{santambrogio}).
Therefore
\begin{equation}
\label{eq:hoeffding}
    W_2^2(\mu,\nu) = \int_{\R^2} |x-y|^2 \d (X_\mu, X_\nu)_\# \mathcal{L}^1 = \int_0^1 |X_\mu(x) - X_\nu(x)|^2 \d x = \|X_\mu - X_\nu \|^2.
\end{equation}
We have shown that the mapping $\phi: \P_2(\R)\to L^2(0,1)$, $\phi(\mu)=X_\mu$ satisfies the following properties (see also the \emph{Hoeffding-Fr\'echet} characterization of distributions with given marginals \cite[Sec. 3.1]{Rachev})):\begin{itemize}
	\item[(i)] for all $\mu,\nu \in \P_2(\R)$\quad  $\|\phi(\mu)-\phi(\nu)\|=W_2(\mu,\nu)$,
	\item[(ii)] for all $f \in \K$ the measure $\mu = f_\sharp \mathcal{L}^1 \in \P_2(\R)$ satisfies $\phi(\mu)=f$.
\end{itemize}	
Therefore, the metric spaces $\left( \P_2(\R),W_2\right)$ and $\left( \K,\| \cdot \|\right)$ are isometric. We recall that the orthogonal projection operator $P_\K:L^2(0,1) \to \K$ can be characterized by the following property (\cite{brezis}, Theorem 5.2): for all $f\in L^2(0,1)$, $P_\K(f)$ is the unique element in $\K$ such that
\begin{equation}
\label{eq:proj}
	\|P_\K(f)-f\|\leq \|g-f\|,\qquad \forall g\in \K.
\end{equation}  
Moreover, the orthogonal projection does not increase distance (\cite[Proposition 5.3]{brezis}):
\begin{equation}
\label{eq:nonexp}
	\|P_\K(f)-P_\K(g) \|\leq \|f-g\|,\qquad \forall f,g \in L^2(0,1).
\end{equation}
If $\lambda \in \mathcal M^s(\P_2(\R))$, $f\in \mathcal K$, and $\phi: \P_2(\R)\to L^2(0,1)$ is the Hoeffding-Fr\'echet isometry,  then $\eta:=\phi_\sharp \lambda \in \mathcal M^s(L^2(0,1))$ and
\begin{align*}
	\int_{L^2(0,1)}  \|f-g\|^2\d\eta(g) & = \int_{\P_2(\R)} \|f-\phi(\nu)\|^2\d\lambda(\nu) =\int_{\P_2(\R)} W_2^2(\phi^{-1}(f),\nu)^2\d\lambda(\nu).
\end{align*}  
In particular, 
\[
	M_2^2(|\lambda|)<+\infty\quad \text{if and only if}\quad M_2^2(|\eta |) = \int_{L^2(0,1)} \|g\|^2\d|\eta|(g) <+\infty.
\]

We can therefore give the following equivalent statement of Theorem \ref{th:intro}. 

\begin{theorem}
\label{th:1d}
Let $\eta$ be a signed measure on $L^2(0,1)$, such that supp$(\eta)\subset \K$,
\[
	\eta (L^2(0,1))=1\qquad \text{and}\qquad M_2(|\eta |)<\infty.
\]
Then, there exists a unique minimizer $f^* \in \K$ of the functional 
\[	
	E:L^2(0,1) \to \R,\qquad E(f) = \int_{L^2(0,1)}  \|f-g\|^2\d\eta(g)
\]
and it is characterized by
\[
	 f^* = P_\K\left( \int_{L^2(0,1)}g\,\d\eta(g)\right).
\]
\end{theorem}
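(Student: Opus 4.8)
The plan is to reduce the constrained minimization of $E$ over the closed convex cone $\K$ to a single orthogonal projection, by showing that $E$ differs from the squared distance to one fixed point of $L^2(0,1)$ only by an additive constant. First I would introduce the vector average $\bar g := \int_{L^2(0,1)} g\,\d\eta(g)$ and check that it is a well-defined element of $L^2(0,1)$. Since $\eta$ is a finite signed measure and $M_2(|\eta|)<\infty$, Cauchy--Schwarz gives $\int \|g\|\,\d|\eta|(g) \le M_2(|\eta|)\,|\eta|(L^2(0,1))^{1/2} < \infty$, so the $L^2$-valued map $g\mapsto g$ is Bochner integrable with respect to $|\eta|$; thus $\bar g$ is well defined and characterized by $\langle f,\bar g\rangle = \int \langle f,g\rangle\,\d\eta(g)$ for every $f\in L^2(0,1)$.

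Next I would expand $\|f-g\|^2 = \|f\|^2 - 2\langle f,g\rangle + \|g\|^2$ and integrate against $\d\eta$, using linearity of the signed integral. The normalization $\eta(L^2(0,1))=1$ turns the first term into $\|f\|^2$, the cross term into $-2\langle f,\bar g\rangle$, while the moment bound guarantees that $c:=\int \|g\|^2\,\d\eta(g)$ is a finite constant independent of $f$. Completing the square then yields
\[
	E(f) = \|f\|^2 - 2\langle f,\bar g\rangle + c = \|f-\bar g\|^2 + \big(c-\|\bar g\|^2\big).
\]
Minimizing $E$ over $\K$ is therefore equivalent to minimizing $\|f-\bar g\|$ over $\K$, and by the characterization \eqref{eq:proj} of the orthogonal projection onto a closed convex set, this problem has the unique solution $f^* = P_\K(\bar g)$. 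Existence, uniqueness, and the characterization all follow simultaneously.

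The step I regard as the genuine content is recognizing that the signedness of $\eta$ does not spoil the argument. If $\eta$ were nonnegative, the average $\bar g$ would itself be nondecreasing, hence already in $\K$, and the projection would reduce to the identity; with negative weights $\bar g$ need not lie in $\K$, which is precisely why $P_\K$ appears nontrivially. What preserves convexity is that the coefficient of the quadratic term is $\eta(L^2(0,1))=1>0$, so $E$ reduces to a shifted squared distance and remains strictly convex on $L^2(0,1)$; this is the one-dimensional incarnation of the positivity condition $\lambda(\X)=1$ emphasized in the introduction. The remaining points — finiteness of the integrals, the interchange of the inner product with the signed integral, and the closedness and convexity of $\K$ as a cone in $L^2(0,1)$ — are routine consequences of the moment hypothesis and of the structure of $\K$.
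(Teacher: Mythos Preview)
Your proposal is correct and follows essentially the same approach as the paper's own proof: define the average $\bar g$, verify its finiteness from the moment hypothesis, expand $\|f-g\|^2$ and integrate to obtain $E(f)=\|f-\bar g\|^2+\text{const}$, then invoke the projection characterization \eqref{eq:proj}. The paper even notes explicitly (in Remark~\ref{rem:general}) that the argument uses only the Hilbert structure and the closedness and convexity of $\K$, which is exactly the observation you make.
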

The solution of Theorem \ref{th:intro} in the Wasserstein space can then be recovered by 
\[
	\mu^* =\phi^{-1}( f^*) =  f^*_\sharp \mathcal{L}^1.
\]	
\begin{remark}
\label{rem:general}
    Since the proof of Theorem \ref{th:1d} only relies on the Hilbert structure of the space $L^2(0,1)$ and on the closedness and convexity properties of $\K$, the same characterization of the solution holds in a more general setting. Precisely: let $(X,|\cdot|)$ be a real Hilbert space and let $K\subset X$ be a closed and convex subset. Let $\eta$ be a signed measure on $X$, such that 
 \begin{equation}
 \label{eq:hyp_gen}
 	\eta (X)=1\qquad \text{and}\qquad M_2(|\eta|)<\infty
\end{equation}	 
(we do not need that supp$(\eta)\subset K$). Then there exists a unique minimizer $x^* \in K$ of the functional 
\[
	G:X \to \R,\qquad G(x) = \int_X |x-y|^2\d\eta(y)
\]
and it is characterized by
\[
	 x^* = P_K\left( \int_X y\,\d\eta(y)\right).
\]
\end{remark}

\begin{remark}
    Theorem 3.1 in \cite{natile-savare} gives a useful characterization of the orthogonal projection $P_\K : L^2(0,1) \to \K$, which may be employed for explicit computations: given $f \in L^2(0,1)$, let $F(t) = \int_0^t f(s) \d s$ be its integral function; let $F^{**}$ be the lower semi-continuous convex envelope of $F$, i.e., the greatest lower semi-continuous convex function which is lower or equal to $F$. $F^{**}$ may also be defined by
    \[
    F^{**}(t) = \sup\{at + b : a,b\in \R,\ as + b \leq F(s) \text{ for a.e. } s \in (0,1)\}.
    \]
Being convex, in every point $F^{**}$ admits a left derivative $\frac{\\d^-}{\d t}F^{**}$ and a right derivative $\frac{\d^+}{\d t}F^{**}$. The projection may then be characterized by
    \[
    P_\K(f) (t) = \frac{\ \d^+}{\d t} F^{**}(t).
    \]
\end{remark}

\begin{proof}[Proof of Theorem \ref{th:1d}]
We adopt the more general setting introduced in Remark \ref{rem:general}: here $(X,|\cdot|)$ is a real Hilbert space and $K\subset X$ is a closed and convex subset. Let 
\[
	\bar x := \int_{X} y\,\d\eta(y)\qquad \text{and}\qquad \mathcal{C}(\eta):= \int_{X}|y|^2-|\bar x|^2 \d\eta(y).
\] 
These quantities are finite and only depend on $\eta$ since, by \eqref{eq:hyp_gen},
\begin{align*}
	|\bar x| &\leq \int_{X} |y|\,\d|\eta|(y) \leq \frac12\int_{X} \left(1+|y|^2\right)\,\d|\eta|(y) = \frac12\left(M(|\eta|+M_2^2(|\eta|))\right),\\
	|\mathcal{C}(\eta)| & \leq M_2^2(|\eta|) +|\bar x|^2.
\end{align*}
We compute 
\begin{align*}
	G(x)&= \int_X |x-y|^2\d\eta(y) \\
		&= \int_X |x|^2 -2x\cdot y +|y|^2\d\eta(y) \\
		&=  |x|^2 -2x\cdot \bar x +\int_X |y|^2\d\eta(y) \\
		&=  |x|^2 -2x\cdot \bar x +|\bar x|^2 +\int_X |y|^2\d\eta(y) -|\bar x|^2\\		
		&=  |x-\bar x|^2 +\mathcal{C}(\eta).
\end{align*}
Therefore, using the projection property \eqref{eq:proj} and exploiting the independence of $\mathcal{C}(\eta)$ from $x$, for all $z\in K$ 
\begin{align*}
	G\left( P_{K}(\bar x)\right)   & = |P_{K}(\bar x)-\bar x|^2 + \mathcal{C}(\eta)\\
		& \leq |z-\bar x|^2 + \mathcal{C}(\eta) \\
		&=G(z).
\end{align*}	
We conclude that $P_{K}(\bar x)$ is the unique minimum of $G$ in $K$.

\end{proof}

\subsection{The discrete case and stability}
\label{ssec:stable}
The barycenter of a finite sum of measures with positive and negative weights is a particular case of Theorem \ref{th:intro}. Precisely, let $n\geq 2$ be a given integer. For $i=1,\dots,n$, let $\nu_i\in \P_2(\R)$ be given and let $\lambda_i$  be real numbers such that $\sum_{i=1}^n\lambda_i =1$. We are choosing, that is, the signed measure:
\[
	\lambda = \sum_{i=1}^n\lambda_i \delta_{\nu_i},
\]
where $\delta_{\nu_i}$ is the Dirac measure concentrated in $\nu_i$. Then, by Theorem \ref{th:intro} there exists a unique solution $\bar \mu \in \P_2(\R)$ of
\[	
	\inf_{\mu \in \P_2(\R)} \sum_{i=1}^n \lambda_i W_2^2(\nu_i,\mu),
\]
and it is characterized by
\begin{equation}
\label{eq:1d_charact}
	X_{\bar \mu} = P_\K\left( \sum_{i=1}^n \lambda_i X_{\nu_i}\right),
\end{equation}
where $X_{\nu_i}\in \K$ is the pseudo-inverse function of (the distribution function of) $\nu_i$.

The explicit characterization of the discrete Wasserstein barycenter in one dimension entails a stability result, with respect to perturbations of the fixed measures:
\begin{lemma}
    Let $\nu_1, \ldots, \nu_n$ and $\Tilde{\nu}_1, \ldots, \Tilde{\nu}_n$ be probability measures in $\P_2(\R)$; let $\lambda_1, \ldots, \lambda_n$ be weights such that $\sum_{i=1}^n \lambda_i =1$ and consider the barycenters $\mu$, $\Tilde{\mu}$ with respect to the measures $\nu_1, \ldots, \nu_n$ and $\Tilde{\nu}_1, \ldots, \Tilde{\nu}_n$, with weights $\lambda_1, \ldots, \lambda_n$. Then
    \[
    W_2(\mu, \Tilde{\mu}) \leq \sum_{i=1}^n |\lambda_i|  W_2(\nu_i,\Tilde{\nu}_i).
    \]
\end{lemma}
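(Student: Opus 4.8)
The plan is to reduce everything to the Hilbert-space picture via the Hoeffding--Fr\'echet isometry and then exploit the explicit characterization \eqref{eq:1d_charact} together with the non-expansivity \eqref{eq:nonexp} of the projection $P_\K$. This is the natural strategy because both barycenters share the same weights $\lambda_i$, so the only difference between $\mu$ and $\tilde\mu$ lives in the arguments of $P_\K$, and $P_\K$ is exactly the tool that controls such differences.

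First I would translate the quantity to be estimated into $L^2(0,1)$. By the isometry property \eqref{eq:hoeffding} we have $W_2(\mu,\tilde\mu)=\|X_\mu - X_{\tilde\mu}\|$, so it suffices to bound $\|X_\mu - X_{\tilde\mu}\|$. Next I would invoke the characterization \eqref{eq:1d_charact} for each barycenter, writing $X_\mu = P_\K\bigl(\sum_{i=1}^n \lambda_i X_{\nu_i}\bigr)$ and $X_{\tilde\mu} = P_\K\bigl(\sum_{i=1}^n \lambda_i X_{\tilde\nu_i}\bigr)$. Since $\sum_i \lambda_i X_{\nu_i}$ and $\sum_i \lambda_i X_{\tilde\nu_i}$ both belong to $L^2(0,1)$ under the hypothesis $\nu_i,\tilde\nu_i\in\P_2(\R)$, the projection is well defined and the two expressions are genuine elements of $\K$.

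The core step is then a direct chain of two inequalities. Applying \eqref{eq:nonexp} to $f=\sum_i \lambda_i X_{\nu_i}$ and $g=\sum_i \lambda_i X_{\tilde\nu_i}$ gives
\[
	\|X_\mu - X_{\tilde\mu}\| \;\leq\; \Bigl\| \sum_{i=1}^n \lambda_i \bigl(X_{\nu_i} - X_{\tilde\nu_i}\bigr) \Bigr\|,
\]
and then the triangle inequality in $L^2(0,1)$, followed once more by \eqref{eq:hoeffding} applied termwise, yields
\[
	\Bigl\| \sum_{i=1}^n \lambda_i \bigl(X_{\nu_i} - X_{\tilde\nu_i}\bigr) \Bigr\| \;\leq\; \sum_{i=1}^n |\lambda_i|\,\|X_{\nu_i} - X_{\tilde\nu_i}\| \;=\; \sum_{i=1}^n |\lambda_i|\, W_2(\nu_i,\tilde\nu_i).
\]
Combining the two displays closes the argument.

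Honestly I do not expect a genuine obstacle here: once the characterization \eqref{eq:1d_charact} is available, the estimate is a one-line consequence of the fact that $P_\K$ is $1$-Lipschitz and that the weights enter identically in both barycenters. The only point deserving a word of care is that the linear combinations $\sum_i \lambda_i X_{\nu_i}$ need not lie in $\K$ (the weights can be negative and $\K$ is merely a convex cone), which is precisely why the projection $P_\K$ must be applied before comparing; but this is handled automatically by \eqref{eq:nonexp}, which holds for arbitrary elements of $L^2(0,1)$, not just for elements of $\K$.
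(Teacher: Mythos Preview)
Your proposal is correct and follows exactly the same route as the paper: apply the isometry \eqref{eq:hoeffding}, the characterization \eqref{eq:1d_charact}, the non-expansivity \eqref{eq:nonexp} of $P_\K$, and the triangle inequality in $L^2(0,1)$. The paper's proof is precisely this four-line chain, with no additional ingredients.
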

\label{stability}
\begin{proof}
    For $\rho$ in $\P_2(\R)$, let $X_{\rho}$ be its monotone rearrangement.  Then, using \eqref{eq:hoeffding}, \eqref{eq:1d_charact}, and the non-expanding property of the projection \eqref{eq:nonexp}, we obtain
    \begin{align*}      
        W_2(\mu,\Tilde{\mu}) &= \left\| P_\K \left(\sum_{i=1}^{n} \lambda_i X_{\nu_i}\right) - P_\K \left(\sum_{i=1}^{n} \lambda_i X_{\Tilde{\nu}_i}\right)\right\| \\
		&\leq \left\| \sum_{i=1}^{n} \lambda_i X_{\nu_i} - \sum_{i=1}^{n} \lambda_i X_{\Tilde{\nu}_i}\right\| \\
         &\leq \sum_{i=1}^{n} |\lambda_i| \left\| X_{\nu_i} -  X_{\Tilde{\nu}_i}\right\| \\
         &=  \sum_{i=1}^n |\lambda_i|  W_2(\nu_i,\Tilde{\nu}_i).
    \end{align*}
\end{proof}


\subsection{An example in the case of two Gaussian measures}
\label{ssec:gauss}
While the (standard) barycenter of two Gaussian measures is always a Gaussian, it is possible to choose the parameters of the measures and a negative weight so that the generalized barycenter of two Gaussians is a Dirac delta.

\begin{remark}
\label{rem:invert}
If $\mu_1, \mu_2, \mu_3$ are three probability measures on $\R$ and $\mu_2$ is the barycenter of $\mu_1$ and $\mu_3$ with parameters $0<\lambda<1$ and $1- \lambda$, then $\mu_3$ is the generalized barycenter for $\mu_1$ and $\mu_2$ with parameters $(\lambda-1)/\lambda$, $1/\lambda$ (and, likewise, $\mu_1$ is a generalized barycenter for $\mu_2$, $\mu_3$ with parameters $1/(1-\lambda)$ and $-\lambda / (1-\lambda)$). This remark follows directly from the equation
\[
    X_{\mu_2} = (1 - \lambda) X_{\mu_1} + \lambda X_{\mu_3}
\]
since then 
\[
    X_{\mu_3} = \frac{1}{\lambda} X_{\mu_2} + \frac{\lambda - 1}{\lambda}X_{\mu_1} 
\]
and no projection on $\K$ is needed (despite the fact that $(\lambda-1)/\lambda<0$).
\end{remark}
Let $\mu_1 \sim \mathcal{N}(m_1,\sigma^2_1)$, $\mu_2 \sim \mathcal{N}(m_2, \sigma_2^2)$ be two Gaussian measures, and suppose that $\sigma_1>\sigma_2$: then it is possible to choose $\bar z \in \R$ such that $\mu_2$ is the barycenter between $\mu_1$ and the Dirac delta $\delta_{\bar{z}}$. Owing to Remark \ref{rem:invert}, $\delta_{\bar{z}}$ is then the generalized barycenter between the Gaussians $\mu_1$ and $\mu_2$. In order to choose $\bar z$, we proceed as follows. For $z\in \R$, the monotone rearrangement of the Wasserstein barycenter $\bar{\mu}$ between $\mu_1$ and an atomic measure $\delta_z$ satisfies
\[
    X_{\bar{\mu}} = (1-\lambda)X_{\mu_1} + \lambda z.
\]
Using the relation $(a X + b)^{-1}(x) = X^{-1}((x-b)/a)$, valid for all invertible functions $X:(0,1) \to \mathbb{R}$ and all $a,b$ in $\R$ with $a \neq 0$, we obtain
\[
    F_{\bar{\mu}} (x) = F_{\mu_1} ((x - \lambda z)/(1-\lambda)),
\]
so that the density of $\bar \mu$ is given by
\[
    F'_{\bar{\mu}} = \frac{1}{1-\lambda}f_{\mu_1}((x - \lambda z)/(1-\lambda)) = K \exp \left( - \frac{(x - ((1-\lambda) m_1 + \lambda z))^2}{2((1-\lambda)\sigma_1)^2}\right)
\]
where $K$ doesn't depend on $x$, so $\bar{\mu} \sim \mathcal{N}((1-\lambda) m_1 + \lambda z, ((1-\lambda)\sigma_1 )^2) $. Now we look for particular values $\bar{\lambda}$ and $\bar{z}$ for which $\bar{\mu} = \mu_2$: setting
\[
    \begin{cases}
        (1-\bar{\lambda}) m_1 + \bar{\lambda} \bar{z} = m_2, \\
        (1-\bar{\lambda})\sigma_1 = \sigma_2,
    \end{cases}
\]
yields 
\[	
	\bar{\lambda} = \frac{\sigma_1 - \sigma_2}{\sigma_1}, \quad \bar{z} = \frac{\sigma_1 m_2 - \sigma_2 m_1}{(\sigma_1-\sigma_2)}.
\]
With this choice of parameters, we have
\[
    X_{\mu_2} = (1-\bar\lambda)X_{\mu_1} + \bar\lambda \bar{z},
\]
and therefore, owing to Remark \ref{rem:invert}, the Dirac measure $\delta_{\bar z}$ is the generalized barycenter of the Gaussian measures $\mu_1$ and $\mu_2$, with weights $1 /\bar{\lambda}>0$ and $(\bar{\lambda}-1)/\bar\lambda<0$.

\section{Proof of Theorem \ref{th:newmain}-(i)}
\label{sec:hilbert}
In this section we employ the direct method of the calculus of variations to prove the existence part of Theorem \ref{th:newmain}. The crucial point is that the functional $\mathcal E$ has a negative term, due to $\lambda^-$, so that lower-semicontinuity of the integrand is not sufficient. In order to overcome this difficulty, in Lemma \ref{lemma:Rcont} we prove continuity of the coupling term in $W_2(\nu,\mu)$. The main tool is the framework of strong-weak topologies in $\P_2(H)$, introduced in \cite{naldi-savare} and recalled in Section \ref{ssec:weak}.

\begin{lemma}[Lower bound]
\label{lemma:lower}
Let $m_2,M$, and $M_2$ be defined as in \eqref{def:m2}, \eqref{def:M}, and \eqref{def:M2}. For all $\mu\in \X$
\begin{equation}
\label{eq:lower}
	\mathcal E(\mu) \geq \frac 12 m_2^2(\mu) - (1+2M(|\lambda|))M_2^2(|\lambda|).
\end{equation}
\end{lemma}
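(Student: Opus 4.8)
The plan is to expand $W_2^2(\mu,\nu)$ against a fixed optimal plan and bound the resulting cross term by a weighted Young's inequality, splitting the integral against $\lambda$ into its positive and negative parts so that the negative contribution (which is the source of difficulty throughout the paper) is controlled from below. First I would fix, for each $\nu$, an optimal plan $\gamma_\nu\in\Gamma_{\rm o}(\mu,\nu)$ and write
\[
	W_2^2(\mu,\nu)=\int_{H\times H}|x-y|^2\,\d\gamma_\nu(x,y)
	=\int_{H\times H}\bigl(|x|^2-2\,x\cdot y+|y|^2\bigr)\,\d\gamma_\nu(x,y),
\]
so that, using the marginals of $\gamma_\nu$, one gets the pointwise (in $\nu$) identity
\[
	W_2^2(\mu,\nu)=m_2^2(\mu)+m_2^2(\nu)-2\int_{H\times H}x\cdot y\,\d\gamma_\nu(x,y).
\]

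The key step is to estimate the cross term. By Cauchy--Schwarz and Young's inequality with a well-chosen weight, for any $\theta>0$,
\[
	\Bigl|2\int_{H\times H}x\cdot y\,\d\gamma_\nu\Bigr|
	\le 2\int_{H\times H}|x|\,|y|\,\d\gamma_\nu
	\le \theta\,m_2^2(\mu)+\frac1\theta\,m_2^2(\nu).
\]
Choosing $\theta=\tfrac12$ yields $W_2^2(\mu,\nu)\ge \tfrac12 m_2^2(\mu)-2m_2^2(\nu)$, and a matching upper bound $W_2^2(\mu,\nu)\le \tfrac32 m_2^2(\mu)+3m_2^2(\nu)$ (or any crude bound of the form $C(m_2^2(\mu)+m_2^2(\nu))$) from the opposite choice of sign. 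The point is that I now have two-sided linear control of $W_2^2(\mu,\nu)$ by $m_2^2(\nu)$ with the correct coefficient $\tfrac12$ on $m_2^2(\mu)$ in the lower bound.

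Now I would integrate against $\lambda=\lambda^+-\lambda^-$, keeping track of signs: the lower bound on $W_2^2$ is used against $\lambda^+$ and the upper bound against $\lambda^-$. Concretely,
\[
	\mathcal E(\mu)=\int_{\X}W_2^2(\mu,\nu)\,\d\lambda^+(\nu)-\int_{\X}W_2^2(\mu,\nu)\,\d\lambda^-(\nu)
	\ge \tfrac12 m_2^2(\mu)\,\lambda^+(\X)-2\!\int_{\X}\!m_2^2(\nu)\,\d\lambda^+(\nu)
	-\int_{\X}\!W_2^2(\mu,\nu)\,\d\lambda^-(\nu),
\]
and bounding the last integral above by $C\bigl(m_2^2(\mu)\,\lambda^-(\X)+M_2^2(|\lambda|)\bigr)$ reintroduces a term in $m_2^2(\mu)$ with a factor $\lambda^-(\X)\le M(|\lambda|)$. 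To absorb this and recover a clean coefficient I would instead, from the outset, apply Young's inequality with a parameter $\theta$ small enough that, after the $\lambda^-$ contribution is accounted for, the net coefficient of $m_2^2(\mu)$ is at least $\tfrac12$; the constant $(1+2M(|\lambda|))M_2^2(|\lambda|)$ on the right-hand side of \eqref{eq:lower} is exactly what emerges when all the $m_2^2(\nu)$-terms are collected using $\lambda^{\pm}(\X)\le M(|\lambda|)$ and $\int m_2^2\,\d|\lambda|=M_2^2(|\lambda|)$. The main obstacle is purely bookkeeping: one must choose the Young weight and the coarse bound on the $\lambda^-$-term so that the surviving coefficient of $m_2^2(\mu)$ is nonnegative and at least $\tfrac12$, while the additive constant telescopes into the stated form; the inequality \eqref{eq:signed_ineq} guarantees all intermediate integrals are finite under \eqref{eq:main_hyp}.
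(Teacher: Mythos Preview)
Your plan is correct and is essentially the paper's argument: expand $W_2^2(\mu,\nu)$ via an optimal plan, control the cross term by Young's inequality with weight $\theta=1/(2M(|\lambda|))$ (equivalently the paper's $\delta=2M(|\lambda|)$), and integrate. The only difference is organizational: instead of splitting into $\lambda^{\pm}$ and applying separate upper/lower bounds, the paper uses the normalization $\lambda(\X)=1$ to write $\mathcal E(\mu)=m_2^2(\mu)+\int_{\X}\bigl(W_2^2(\mu,\nu)-m_2^2(\mu)\bigr)\,\d\lambda(\nu)$ and then bounds $|W_2^2(\mu,\nu)-m_2^2(\mu)|\le \delta^{-1}m_2^2(\mu)+(1+\delta)m_2^2(\nu)$ once, integrating directly against $|\lambda|$; this shortcuts the bookkeeping you flag as the main obstacle and makes the choice of weight and the emergence of the constant $(1+2M(|\lambda|))M_2^2(|\lambda|)$ transparent.
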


\begin{proof}
For $\mu,\nu\in \X$ and $\gamma \in \Gamma_{\rm o}(\mu,\nu$) 
\begin{align*}
	W_2^2(\mu,\nu) &= \int_{H \times H}|x-y|^2\d\gamma(x,y) \\
	&=\int_{H}|x|^2\d\mu(x) +  \int_{H}|y|^2\d\nu(y) -2 \int_{H \times H}x\cdot y\,\d\gamma(x,y).
\end{align*}
Therefore, for all $\delta>0$, by Young's inequality
\begin{align*}
	\left|W_2^2(\mu,\nu) -m_2^2(\mu)\right|&\leq m_2^2(\nu)+ 2 \left|\int_{H \times H}x\cdot y\,\d\gamma(x,y)\right| \\
	& \leq m_2^2(\nu)+ \int_{H \times H}\frac{|x|^2}{\delta} +\delta |y|^2\,\d\gamma(x,y)\\
	& =\frac{m_2^2(\mu)}{\delta} +(1+\delta) m_2^2(\nu).
\end{align*}
Using $\lambda(\X)=1$, \eqref{eq:signed_ineq}, and the last inequality, we obtain
\begin{align*}
	\mathcal{E}(\mu) & = \int_{\X} W_2^2(\mu,\nu)\d\lambda(\nu) \\
		& =m_2^2(\mu)+\int_{\X}\left( W_2^2(\mu,\nu)-m_2^2(\mu)\right)\d\lambda(\nu)\\
		& \geq m_2^2(\mu) - \int_{\X} \left( \frac{m_2^2(\mu)}{\delta} +(1+\delta) m_2^2(\nu) \right)\d|\lambda|(\nu)\\
				& =  \left(1-\frac{M(|\lambda|)}{\delta}\right)m_2^2(\mu) -(1+\delta)M_2^2(|\lambda|).
\end{align*}
Choosing $\delta=2M(|\lambda|)$ we obtain the thesis.
\end{proof}
Let 
\begin{equation*}
	\mathcal{R}(\nu,\mu):=W_2^2(\nu,\mu)-\int_{H}|x|^2\d\nu(x)-\int_{H}|y|^2\d\mu(y)=\min_{\gamma \in \Gamma(\nu,\mu)}\left\{ \int_{H \times H}-2x\cdot y\,\d\gamma(x,y)\right\}.
\end{equation*}

In the next Lemma, we prove the continuity of $\mathcal R$ with respect to the strong-weak convergence of measure. 
\begin{lemma}[Continuity of $\mathcal R$]
\label{lemma:Rcont}
Let $(\mu_n)_{n\in \N}\subset \X$ be a sequence converging to $\mu \in \X$ with respect to the (weak) topology of $\P_2^w(H)$ and let $(\nu_n)_{n\in \N}\subset \X$ be a sequence converging to $\nu \in \X$ with respect to the (strong) topology of $\P_2(H)$. 
Then, 
\begin{equation}
\label{eq:lsc_2}
	\lim_{n \to \infty}\mathcal R (\nu_n,\mu_n)=\mathcal R (\nu,\mu).
\end{equation}
\end{lemma}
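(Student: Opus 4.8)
The plan is to read $\mathcal R$ as an optimal-coupling functional for a cost that is continuous in the strong--weak topology of \cite{naldi-savare}, and then to run the direct method on the \emph{optimal plans} rather than on the measures themselves. Write $Z=H_s\times H_w$ and $\zeta(x,y):=-2\,x\cdot y$, so that $\mathcal R(\nu,\mu)=\min_{\gamma\in\Gamma(\nu,\mu)}\int_Z\zeta\,\d\gamma$. The first point is that $\zeta\in C^{sw}_2(Z)$: if $x_k\to x$ strongly and $y_k\weakto y$ weakly then $x_k\cdot y_k\to x\cdot y$, which gives the sequential strong--weak continuity, while Young's inequality $2|x\cdot y|\le \e^{-1}|x|^2+\e|y|^2$ gives the required growth bound with $A_\e=\e^{-1}$.

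For the lower bound I would choose optimal plans $\gamma_n\in\Gamma_{\rm o}(\nu_n,\mu_n)$, so that $\mathcal R(\nu_n,\mu_n)=\int_Z\zeta\,\d\gamma_n$, and verify the three conditions of Proposition \ref{prop:NS}. Narrow precompactness in $\P(Z)$ follows from tightness of the marginals: $(\nu_n)$ is tight in $H_s$ because it converges strongly, and $(\mu_n)$ is tight in $H_w$ because its second moments are bounded (Corollary \ref{cor:NS}) and bounded sets of a separable Hilbert space are weakly compact. Condition (ii) is $\int_Z|x|^2\,\d\gamma_n=m_2^2(\nu_n)\to m_2^2(\nu)$, which is exactly the strong convergence of $\nu_n$; condition (iii) is $\sup_n\int_Z|y|^2\,\d\gamma_n=\sup_n m_2^2(\mu_n)<\infty$. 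Hence, up to a subsequence, $\gamma_n\to\gamma$ in $\P^{sw}_2(Z)$ with $\gamma\in\Gamma(\nu,\mu)$ (the projections are continuous for narrow convergence). Since $\zeta\in C^{sw}_2(Z)$ we get $\mathcal R(\nu_n,\mu_n)=\int_Z\zeta\,\d\gamma_n\to\int_Z\zeta\,\d\gamma\ge\mathcal R(\nu,\mu)$, and a subsequence argument yields $\liminf_n\mathcal R(\nu_n,\mu_n)\ge\mathcal R(\nu,\mu)$.

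It remains to prove the matching upper bound. The perturbation of the \emph{strong} marginal is cheap: gluing (Lemma \ref{lemma:glue}, with common measure $\nu_n$) an optimal plan of $\Gamma_{\rm o}(\nu_n,\nu)$ with one of $\Gamma_{\rm o}(\nu_n,\mu_n)$ produces a $3$-plan $\g_n$ whose outer marginals yield a competitor in $\Gamma(\nu,\mu_n)$; by Cauchy--Schwarz this gives the two-sided estimate $|\mathcal R(\nu_n,\mu_n)-\mathcal R(\nu,\mu_n)|\le 2\,W_2(\nu,\nu_n)\,(m_2^2(\mu_n))^{1/2}\to 0$, which reduces everything to the case $\nu$ fixed, $\mu_n\weakto\mu$. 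Here a naive recovery sequence fails, since one cannot couple $\mu$ to $\mu_n$ with small $W_2$-cost when the convergence is only weak. The way I would close the gap is to show that the limit plan $\gamma$ of the lower-bound step is \emph{itself optimal}: each $\gamma_n\in\Gamma_{\rm o}(\nu,\mu_n)$ is concentrated on a $\zeta$-cyclically monotone set, and because $\zeta$ is continuous on $Z$ and $\gamma_n\to\gamma$ narrowly, this property passes to the limit, so that $\int_Z\zeta\,\d\gamma=\mathcal R(\nu,\mu)$. Combined with the lower bound this gives $\lim_n\mathcal R(\nu_n,\mu_n)=\mathcal R(\nu,\mu)$.

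The main obstacle is precisely this last step, namely the stability of optimality under strong--weak convergence. The delicate issue is that $H_w$ is not metrizable, so the usual cyclical-monotonicity limiting argument must be localized to the bounded (hence weakly metrizable) sets on which the plans essentially concentrate by virtue of the uniform moment bounds; equivalently, one may argue by Kantorovich duality, passing the dual potentials of $\gamma_n$ to the limit to produce a dual certificate whose value equals $\int_Z\zeta\,\d\gamma$. I expect this localization (or duality) argument to be the technical heart of the proof, whereas the compactness of the optimal plans and the gluing estimate are routine within the Naldi--Savar\'e framework.
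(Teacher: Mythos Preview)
Your proposal is correct and follows essentially the same route as the paper: pick optimal plans $\gamma_n\in\Gamma_{\rm o}(\nu_n,\mu_n)$, establish their convergence in $\P_2^{sw}(Z)$ via Proposition~\ref{prop:NS}, and use that $\zeta(x,y)=-2x\cdot y\in C_2^{sw}(Z)$ to pass to the limit in the integral.

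The one organizational difference is that the paper does not split into a liminf and a limsup step, nor does it reduce to the case of fixed $\nu$ by gluing. Instead, it obtains equality in one stroke by invoking \cite[Theorem 3.8]{naldi-savare}, which asserts directly that any narrow limit in $\P(H_s\times H_w)$ of plans $\gamma_n\in\Gamma_{\rm o}(\nu_n,\mu_n)$ belongs to $\Gamma_{\rm o}(\nu,\mu)$. This is precisely the ``stability of optimality under strong--weak convergence'' that you flag as the main obstacle; the localization/duality argument you sketch is essentially what that theorem packages. Once you have it, your gluing detour and the separate upper bound become unnecessary: $\int_Z\zeta\,\d\gamma_n\to\int_Z\zeta\,\d\gamma=\mathcal R(\nu,\mu)$ directly, and a subsequence argument upgrades this to convergence of the full sequence.
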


\begin{proof}
For each couple $(\nu_n, \mu_n)$, there exists $\gamma_n \in \Gamma_{\rm o}(\mu_n,\nu_n)$ such that
\[
	\mathcal R(\nu_n, \mu_n)=\int_{H \times H}-2x\cdot y\,\d\gamma_n(x,y).
\]
We are going to show that $\gamma_n \to \gamma \in \Gamma_{\rm o}(\nu,\mu)$ narrowly in $\P(Z)$, in order to apply Proposition \ref{prop:NS} to $\mathcal R(\nu_n,\mu_n)$. First, we notice that by Corollary \ref{cor:NS}-(i),
\begin{equation}
\label{eq:lsc_1}
	\sup_{n\in \N}m_2^2(\mu_n)<+\infty.
\end{equation}
Since $(\pi_\sharp^1\gamma_n)=(\nu_n)$ is strongly converging in $\X$ (and thus tight in $\P(H_s)$) and $(\pi_\sharp^2\gamma_n)=(\mu_n)$ is tight in $\P(H_w)$ by \eqref{eq:lsc_1}, then $(\gamma_n)$ is tight in $\P(H_s \times H_w)$. Indeed, since $(\nu_n)$ is tight in $\P(H_s)$, there exists a lower-semicontinuous function $\psi:H\to [0,+\infty]$, with strongly compact sublevels, such that
\[
	\sup_{n\in\N}\int_H \psi(x)\d\nu_n(x)=S<+\infty.
\]  
Thus
\[
	\sup_{n\in\N}\int_{H\times H}\left(\psi(x) +|y|^2\right)\d\gamma_n(x,y) \leq S +\sup_{n\in \N}m_2^2(\mu_n) < +\infty.
\]
Since the sublevel sets of $\psi(x) +|y|^2$ are compact in $H_s \times H_w$, $(\gamma_n)$ is tight in $\P(H_s \times H_w)$. Let $\gamma$ be any narrow limit point of $(\gamma_n)$. Then for all $f\in C_b(H_s)$ and for all $g \in C_b(H_w)$
\[
	\int_{H\times H} f(x)\d\gamma(x,y) = \lim_{n \to \infty} \int_{H\times H} f(x)\d\gamma_n(x,y) =  \lim_{n \to \infty} \int_H f(x)\d\nu_n(x)  =\int_H f(x)\d\nu(x)
\]
and, by Corollary \ref{cor:NS}-(i),
\[
	\int_{H\times H} g(y)\d\gamma(x,y) = \lim_{n \to \infty} \int_{H\times H} g(y)\d\gamma_n(x,y) = \lim_{n \to \infty} \int_H g(y)\d\mu_n(y)  = \int_H g(y)\d\mu(y).
\]
Therefore, $\gamma\in \Gamma(\nu,\mu).$ Furthermore $\gamma \in \Gamma_{\rm o}(\mu,\nu)$ by ((\cite{naldi-savare}, Theorem 3.8). Since 
\[
	\lim_{n \to \infty} \int_Z |x|^2\, \d\gamma_n(x,y) = 	\lim_{n \to \infty} \int_{H} |x|^2\, \d\nu_n(x) = \int_{H} |x|^2\, \d\nu(x) =\int_Z |x|^2\, \d\gamma(x,y)
\]
and, by \eqref{eq:lsc_1},
\[
	\int_Z |y|^2\, \d\gamma_n(x,y) =\int_{H} |y|^2\, \d\mu_n(y) = m_2^2(\mu_n) <+\infty,
\]
by Proposition \ref{prop:NS} we conclude that $\gamma_n \to \gamma$ in $\P_2^{sw}(Z)$. The function 
\[
	\zeta:Z \to \R,\qquad \zeta(x,y)=-2x\cdot y
\]	 
is sequentially continuous with respect to the strong-weak product topology on $Z$ and by Young's inequality $\forall \e >0$
\[
	  |\zeta(x,y)|  \leq  A_\e \left(1+|x|^2\right)+\e|y|^2,
\]
with $A_\e=1/\e$. Therefore, exploiting the optimality of $\gamma_n$ and $\gamma$,
\begin{align*}
	\lim_{n\to \infty}\mathcal R(\nu_n,\mu_n)&= \lim_{n\to \infty} \left(W_2^2(\nu_n,\mu_n)-\int_H|x|^2\d\nu_n(x)-\int_H|y|^2\d\mu_n(y)\right)\\
	&= \lim_{n\to \infty} \int_{H \times H}-2x\cdot y\,\d\gamma_n(x,y) \\
	&= \int_{H \times H}-2x\cdot y\,\d\gamma(x,y) \\
	&= W_2^2(\nu,\mu)-\int_H|x|^2\d\nu(x)-\int_H|y|^2\d\mu(y)\\
	&=\mathcal R(\mu,\nu).
\end{align*}
\end{proof}

We have now all the elements to prove existence of a minimizer for $\mathscr{E}$.
\begin{proof}[Proof of Theorem \ref{th:newmain}-(i)]
By \eqref{eq:lower}
\[
	\underline{\mathcal{E}}:=\inf_{\mu \in \X} \mathcal E(\mu) \geq \inf_{\mu \in \X} \frac 12 m_2^2(\mu) - (1+2M)M_2^2(|\lambda|) >-\infty.
\]	
Let $(\mu_n)_{n\in \N}$ be a minimizing sequence, that is
\[
	\lim_{n \to \infty} \mathcal{E}(\mu_n)=\underline{\mathcal E}.
\]
By \eqref{eq:lower}
\[
	\sup_{n\in \N}m_2^2(\mu_n) =:S<+\infty
\]
and by Corollary \ref{cor:NS}-(ii) $(\mu_n)$ is relatively sequentially compact in $\P_2^w(H)$. Then, there exists a subsequence (which we do not relabel) and a limit point $\mu\in \X$ such that $\mu_n \to \mu$ with respect to the topology of $\P_2^w(H)$. Define the sequence of functions $f_n:\X \to \R$
\[
	f_n(\nu):=\mathcal R(\nu,\mu_n) = W_2^2(\nu,\mu_n)-m_2^2(\nu)-m_2^2(\mu_n)
\]
By \eqref{eq:lsc_2}, for all $\nu \in \X$
\[
	\lim_{n \to \infty} f_n(\nu) = \mathcal R(\nu,\mu) =:f(\nu).
\]	
For any $\gamma_n\in \Gamma_{\rm o}(\nu,\mu_n)$, by Young's inequality
\[
	|f_n(\nu)|=\left| \int_{H \times H}2x\cdot y\d\gamma_n(x,y)\right|\leq m_2^2(\nu)+m_2^2(\mu_n) \leq m_2^2(\nu) +S=:g(\nu).
\]
By hypothesis \eqref{eq:main_hyp}, $g\in L^1(\X,|\lambda|)$ and therefore, by Lebesgue's dominated convergence theorem
\begin{align}
	\lim_{n\to \infty} \int_{\X}f_n(\nu)\d\lambda(\nu) &=  \lim_{n\to \infty} \int_{\X}f_n(\nu)\d\lambda^+(\nu) - \lim_{n\to \infty} \int_{\X}f_n(\nu)\d\lambda^-(\nu)  \nonumber\\
		&=   \int_{\X}f(\nu)\d\lambda^+(\nu) - \int_{\X}f_n(\nu)\d\lambda^-(\nu)  \nonumber\\
		&= \int_{\X} f(\nu)\d\lambda(\nu).\label{eq:cont}
\end{align} 
The lower semicontinuity of $\mu \mapsto \int_H|y|^2\d\mu(y)=m_2^2(\mu)$ with respect to the narrow convergence of $\P(H)$ is a standard result. The lower semicontinuity with respect to the weak topology of $P_2^w(H)$ can be checked, e.g., by \cite[Theorem 5.1]{naldi-savare}, noticing that $m_2^2(\mu)=W_2^2(\mu,\delta_0)$. Finally, since
\begin{align*}
	\mathcal{E}(\mu) &= \int_{\X} W_2^2(\nu,\mu)\d\lambda(\nu)\\
		&= \int_{\X} \left( \int_H|x|^2\d\nu(x) + \int_H|y|^2\d\mu(y)+\mathcal{R}(\nu,\mu)\right)\d\lambda(\nu)\\
		&= m_2^2(\mu) +\int_{\X} \left(f(\nu) + m_2^2(\nu) \right)\d\lambda(\nu),
\end{align*}
owing to the lower semicontinuity of $m_2^2$ with respect to the topology of $P_2^w(H)$ and to the convergence in \eqref{eq:cont},
\begin{align*}
	\underline{\mathcal{E}}=\lim_{n\to \infty}\mathcal{E}(\mu_n) &\geq \liminf_{n\to \infty} m_2^2(\mu_n) +\lim_{n\to \infty}\int_{\X} \left(f_n(\nu) + m_2^2(\nu) \right)\d\lambda(\nu)\\
		&\geq m_2^2(\mu) +\int_{\X} \left(f(\nu) + m_2^2(\nu) \right)\d\lambda(\nu)\\
		&=\mathcal{E}(\mu),
\end{align*}
which shows that any weak limit point $\mu$ of $(\mu_n)$ is a minimizer of $\mathcal E$.
\end{proof}


\section{Uniqueness in Hilbert spaces}
\label{sec:unique}

In this section we specialize to the case where $\lambda^+$ is concentrated on a single measure $\nu_0\in\X$. In this case, uniqueness of the solution holds, even in contrast to the case of regular barycenters. 

In order to highlight the different role of the positive and negative weights, we use the following notation. Let $\alpha>0$ and $\nu_0\in \X$; let $\sigma$ be a positive measure on $\X$ such that 
\[
	\sigma(\nu_0)=0,\qquad \sigma(\X)=\alpha.
\]
We study 
\begin{equation}
\label{def:onepos}
	\mathscr{E}: \P_2(X) \to \R,\qquad \mathscr{E}(\mu) = (1+\alpha) W_2^2(\mu, \nu_0) - \int_{\X}  W_2^2(\mu, \nu)\d\sigma(\nu).
\end{equation}
With respect to the notation in the previous section, we are setting 
\[
	\lambda = \lambda^+-\lambda^-,\qquad \lambda^+=(1+\alpha)\delta_{\nu_0},\quad \lambda^-=\sigma.
\]	
\subsection{$\lambda$-convexity}
We will use the concept of $\lambda-$convexity along geodesics, as defined in \cite[Definition 9.2.4]{ambrosio-gigli-savare}; the argument, which we briefly expose below, is essentially the same as \cite[Theorem 4.1]{matthes-plazotta}: we adapt it to our functional.
Let $(M,d)$ be a complete metric space, and suppose we have an appropriate concept of connecting curve $\gamma(s) = \gamma_s: [0,1] \to M$ between two points $\gamma_0, \gamma_1$ in $M$, that we call \emph{generalized geodesic}. We say that a functional $F:M\to \R \cup \{+\infty\}$ is $\lambda-$convex along generalized geodesics (with $\lambda>0$) if for every $\gamma_0, \gamma_1$ in $M$, for every generalized geodesic $\gamma_s$ connecting $\gamma_0$ and $\gamma_1$, and for every $s$ in $[0,1]$, the inequality
\[
F(\gamma_s) \leq (1-s) F(\gamma_0) + s F(\gamma_1) - \lambda s (1-s) d^2(\gamma_0, \gamma_1)
\]
holds. If $F$ is $\lambda-$convex for some $\lambda>0$ and some class of generalized geodesics, if it is bounded from below, and it is lower-semicontinuous with respect to the metric topology of $M$, then $F$ admits a minimum in $M$. This is true because, taken a minimizing sequence $(\mu_i)_{i \in \N}$ in $M$, we have
\[
d^2(\mu_m, \mu_n) \leq 4/\lambda \left[ \frac{1}{2} (F(\mu_m) + F(\mu_n)) - F(\gamma_{1/2})\right]
\]
where $\gamma_s$ is a generalized geodesic connecting $\mu_m$ and $\mu_n$ and we have taken $s=1/2$.
Since the right-hand side term tends to $0$ as $m,n \to \infty$, the sequence is Cauchy; by completeness of $M$ and lower semi-continuity of $F$ we conclude that $\lim \mu_n$ is a minimum point for $F$.

Note that $\lambda-$convexity is stronger than strict convexity: if we have a minimum for $F$, it must also be unique.

If we specialize to the case $(M,d) = (\X, W_2)$, the right concept of generalized geodesic has been introduced in \cite[Definition 9.2.2]{ambrosio-gigli-savare}: we report here the definition and the basic properties.

\begin{definition}
        Let $\gamma_0$, $\gamma_1$ and $\nu_0$ be in $\X$. Choose optimal transports $\boldsymbol{\gamma}_{02}$ in $\Gamma_{\rm o}(\gamma_0,\nu_0)$ and $\boldsymbol{\gamma}_{12}$ in $\Gamma_{\rm o}(\gamma_1,\nu_0)$. By Lemma \ref{lemma:glue}, we can glue $\boldsymbol{\gamma}_{02}$ and $\boldsymbol{\gamma}_{12}$ along the common marginal $\nu_0$, obtaining a measure $\boldsymbol{\gamma}$ in $\P(H \times H \times H)$ such that $(\pi^{0,2})_{\#} \boldsymbol{\gamma} = \boldsymbol{\gamma}_{02}$ and $(\pi^{1,2})_{\#} \boldsymbol{\gamma} = \boldsymbol{\gamma}_{12}$. For $t\in (0,1)$, denote by $\pi^{0\to1}_t$ 
        the interpolating projection $(1-t) \pi^0 + t \pi^1:H^3 \to H$. A generalized geodesic $\gamma_t$ connecting $\gamma_0$ to $\gamma_1$, with basepoint $\nu_0$, is the curve $t \mapsto (\pi^{0\to1}_t)_{\#} \boldsymbol{\gamma} \in \X$ (see Fig. \ref{fig1}).
\end{definition}
\begin{center}
\begin{tikzpicture}
    \filldraw (0,0) circle (2pt);
    \node[yshift = .3cm] at (0,0) {$\nu_0$};
    \filldraw (2,1) circle (2pt);
    \node[yshift = .3cm] at (2,1) {$\gamma_0$};
    \filldraw (2,-1) circle (2pt);
    \node[yshift = -0.3cm] at (2,-1) {$\gamma_1$};
    \filldraw[gray] (2.3, .34) circle (2pt);
    \node[xshift = 0.4cm] at (2.3,.34) {$\gamma_t$};

    \draw [dotted, thick] (2,1) .. controls (2.5,0) .. (2,-1);
    \draw [thick] (0,0) -- (2,1);
    \draw[thick] (0,0) -- (2,-1);
    \draw[thick] (2,1) -- (2,-1);
\end{tikzpicture}
\captionof{figure}{Generalized geodesic with basepoint $\nu_0$ (dotted). The continuous lines denote regular geodesics induced by the optimal transports $(\pi^{0,2})_{\#} \boldsymbol{\gamma} = \boldsymbol{\gamma}_{02}$, $(\pi^{1,2})_{\#} \boldsymbol{\gamma} = \boldsymbol{\gamma}_{12}$, and an optimal transport between $\gamma_0$ and $\gamma_1$; the dotted line is the curve induced by the (not necessarily optimal) transport $(\pi^{0\to1}_t)_{\#} \boldsymbol{\gamma}$. 
}
\label{fig1}
\end{center}
Notation: Following \cite{ambrosio-gigli-savare}, we set 
$$
\pi^{i \to j}_t := (1-t) \pi^i + t \pi^j, \qquad \gamma^{i \to j}_t := (\pi^{i \to j}_t)_\# \boldsymbol{\gamma}.
$$
We will show that the functional $\mathscr{E}$ defined in \eqref{def:onepos} is $1-$convex; from this fact it will follow immediately that $\mathscr{E}$ has exactly one minimizer in $\P_2(X)$. Our proof follows the $\lambda$-convexity proof given in \cite[Theorem 3.4]{matthes-plazotta} in the case of metric extrapolation, i.e., with one positive and one negative coefficient. Nonetheless, it should be noted that extending that theorem to a higher number of measures should be done carefully, since there are precise constraints on the conditions that can be imposed on the marginals, in order for a common product measure to exist (see also Remark \ref{rem:basepoints} below).

We need the following lemma, where we adapt the notation to our case (see Fig. \ref{fig2}).

\begin{lemma}
    [Curve extension lemma; \cite{ambrosio-gigli-savare}, Proposition 7.3.1]
    Given $\gamma \in \P_2(H \times H)$, $\nu \in \X$, and $t \in [0,1]$, there exists $\boldsymbol{\nu}_t \in \P_2(H \times H \times H)$ such that $(\pi^{0,1})_{\#} \boldsymbol{\nu}_t = \gamma$ and $(\pi^{0\to1}_t,\pi^2)_{\#} \boldsymbol{\nu}_t \in \Gamma_{\rm o}((\pi^{0\to1}_t)_{\#}\gamma,\nu)$.
\end{lemma}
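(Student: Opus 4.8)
The plan is to prove this by a disintegration and gluing argument, in the same spirit as Lemma \ref{lemma:glue}. First I would introduce the interpolated measure $\mu_t := (\pi^{0\to1}_t)_{\#}\gamma$, where here I view $\pi^{0\to1}_t$ as the map $r_t:H\times H\to H$, $r_t(x,y)=(1-t)x+ty$, acting on the two coordinates of $\gamma$. This $\mu_t$ lies in $\X$: by convexity of $|\cdot|^2$ one has $|(1-t)x+ty|^2\le (1-t)|x|^2+t|y|^2$, hence $m_2^2(\mu_t)\le (1-t)\,m_2^2(\pi^0_\#\gamma)+t\,m_2^2(\pi^1_\#\gamma)<+\infty$ since $\gamma\in\P_2(H\times H)$. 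Because $\mu_t,\nu\in\X$, the set $\Gamma_{\rm o}(\mu_t,\nu)$ is nonempty, and I would fix one optimal plan $\beta\in\Gamma_{\rm o}(\mu_t,\nu)$.

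Next I would glue $\gamma$ and $\beta$ over their common marginal $\mu_t$. By the disintegration theorem on the separable Hilbert space $H$, I can write $\gamma=\int_H \gamma_z\,\d\mu_t(z)$ with each $\gamma_z\in\P(H\times H)$ concentrated on the fiber $r_t^{-1}(\{z\})$, and likewise disintegrate the optimal plan with respect to its first marginal, $\beta=\int_H \beta_z\,\d\mu_t(z)$ with $\beta_z\in\P(H)$ playing the role of the law of the third coordinate. I would then define the candidate as the fiberwise independent coupling
\[
	\boldsymbol{\nu}_t := \int_H \gamma_z\otimes\beta_z\,\d\mu_t(z) \in \P(H\times H\times H).
\]

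Finally I would verify the two required identities. Integrating out the third coordinate gives $(\pi^{0,1})_{\#}\boldsymbol{\nu}_t=\int_H\gamma_z\,\d\mu_t(z)=\gamma$, the first condition. For the second, since each $\gamma_z$ is supported on $r_t^{-1}(\{z\})$ we have $(\pi^{0\to1}_t)_{\#}\gamma_z=\delta_z$, so that $(\pi^{0\to1}_t,\pi^2)_{\#}\boldsymbol{\nu}_t=\int_H\delta_z\otimes\beta_z\,\d\mu_t(z)=\beta\in\Gamma_{\rm o}(\mu_t,\nu)$, which is exactly the claimed optimality. The membership $\boldsymbol{\nu}_t\in\P_2(H\times H\times H)$ follows at once, because its three coordinate marginals are $\pi^0_\#\gamma$, $\pi^1_\#\gamma$, and $\nu$, all of which belong to $\X$.

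The main obstacle is purely technical: one must ensure that the two disintegrations are jointly measurable in $z$, so that the product kernel $z\mapsto\gamma_z\otimes\beta_z$ is a Borel family and the integral defining $\boldsymbol{\nu}_t$ is well posed. This measurability is guaranteed by the disintegration theorem on Polish (here, separable Hilbert) spaces, exactly as in the construction underlying the Gluing Lemma \ref{lemma:glue}; once it is in hand, the marginal computations above are immediate, and no further estimate is needed.
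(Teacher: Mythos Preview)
Your argument is correct: defining $\mu_t=(r_t)_\#\gamma$, picking an optimal $\beta\in\Gamma_{\rm o}(\mu_t,\nu)$, disintegrating both $\gamma$ (with respect to $r_t$) and $\beta$ (with respect to its first marginal) over $\mu_t$, and then gluing fiberwise via $\boldsymbol{\nu}_t=\int_H\gamma_z\otimes\beta_z\,\d\mu_t(z)$ gives exactly the measure with the two required push-forward properties. The verification that $(\pi^{0\to1}_t,\pi^2)_\#\boldsymbol{\nu}_t=\beta$ using $(r_t)_\#\gamma_z=\delta_z$ is clean and correct, and the measurability concern you flag is indeed handled by the standard disintegration theorem on Polish spaces, just as in the Gluing Lemma.

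Note, however, that the paper does not supply its own proof of this lemma: it is simply quoted from \cite[Proposition~7.3.1]{ambrosio-gigli-savare}. Your disintegration-and-gluing construction is precisely the argument behind that proposition (it is the same mechanism as Lemma~\ref{lemma:glue}, with the role of the shared marginal played by $\mu_t$ rather than by a coordinate projection), so you have essentially reproduced the proof the paper defers to the reference.
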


\begin{center}
\begin{tikzpicture}
    \filldraw (0,0) circle (2pt);
    \node[yshift = .3cm] at (0,0) {$\nu_0$};
    \filldraw (2,1) circle (2pt);
    \node[yshift = .3cm] at (2,1) {$\gamma_0$};
    \filldraw (2,-1) circle (2pt);
    \node[yshift = -0.3cm] at (2,-1) {$\gamma_1$};

    \filldraw[gray] (2.3, .34) circle (2pt);
    \node[xshift = 0.3cm, yshift = .15cm] at (2.3,.34) {$\gamma_t$};

    \filldraw (4,0) circle (2pt);
    \node[yshift = .3cm] at (4,0) {$\nu$};

    \draw [dotted,thick] (2,1) .. controls (2.5,0) .. (2,-1);
    \draw[dotted,thick] (4,0) .. controls (3.10,1.14) .. (2,1);
    \draw[dotted,thick] (4,0) .. controls (3.10,-1.14) .. (2,-1);
    \draw[thick] (4,0) -- (2.3,.34);
    \draw [thick] (0,0) -- (2,1);
    \draw[thick] (0,0) -- (2,-1);
    \draw[thick] (2,1) -- (2,-1);
\end{tikzpicture}
\captionof{figure}{Here's how we will use the lemma: with the same notation we used to define the generalized geodesic $\gamma_t$, $\gamma$ will be $(\pi^{0,1})_{\#} \boldsymbol{\gamma}$, $\nu_0$, is the fixed measure associated with the positive coefficient $(1+\alpha)$, $\nu$ will be integrated with respect to $\sigma=\lambda^-$ and $t$ will be a fixed time in $(0,1)$. This lemma yields a measure $\boldsymbol{\nu}_t$ in $\P(H \times H \times H)$ such that $(\pi^{0,1})_{\#} \boldsymbol{\nu}_t = (\pi^{0,1})_{\#}\g$ and $(\pi^{0\to1}_t, \pi^2)_{\#} \boldsymbol{\nu}_t \in \Gamma_{\rm o}(\gamma_t, \nu)$.}
\label{fig2}
\end{center}

\begin{theorem}
     The functional $\mathscr{E}$ defined in \eqref{def:onepos} is $1-$convex along generalized geodesics with basepoint $\nu_0 \in\X$.
\end{theorem}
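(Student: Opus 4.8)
The plan is to exploit the defining three-plan $\g$ of the generalized geodesic together with the Curve Extension Lemma, so that a single quantity — the cross energy $D := \int_{H\times H\times H} |x_0 - x_1|^2\,\d\g$ — controls both the positive term $W_2^2(\cdot,\nu_0)$ and, simultaneously and with the \emph{same} value, the negative terms $W_2^2(\cdot,\nu)$ for every $\nu$ in the support of $\sigma$. Throughout I write $x_0,x_1,x_2$ for the three coordinates on $H\times H\times H$, with $x_2$ the basepoint ($\nu_0$) coordinate, and I record the elementary Hilbert identity
\[
|(1-t)x_0 + t x_1 - x_2|^2 = (1-t)|x_0-x_2|^2 + t|x_1-x_2|^2 - t(1-t)|x_0-x_1|^2,
\]
which is the algebraic heart of the whole computation. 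Since $(\pi^{0,1})_\#\g \in \Gamma(\gamma_0,\gamma_1)$, we have $D \geq W_2^2(\gamma_0,\gamma_1)$, and this inequality is where the convexity modulus $\lambda=1$ will ultimately come from.

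First I would bound the positive part from above. The plan $(\pi^{0\to1}_t,\pi^2)_\#\g$ is admissible, though not necessarily optimal, between $\gamma_t$ and $\nu_0$; integrating the identity above against $\g$ and using that $\boldsymbol{\gamma}_{02}$ and $\boldsymbol{\gamma}_{12}$ are optimal transports to $\nu_0$ yields
\[
W_2^2(\gamma_t,\nu_0) \leq (1-t)\,W_2^2(\gamma_0,\nu_0) + t\,W_2^2(\gamma_1,\nu_0) - t(1-t)\,D.
\]
Next I would bound the negative part from below. Here I would fix $t \in (0,1)$ and, for each $\nu \in \X$, apply the Curve Extension Lemma to the plan $\gamma := (\pi^{0,1})_\#\g$ to obtain $\boldsymbol{\nu}_t \in \P_2(H\times H\times H)$ with $(\pi^{0,1})_\#\boldsymbol{\nu}_t = \gamma$ and $(\pi^{0\to1}_t,\pi^2)_\#\boldsymbol{\nu}_t \in \Gamma_{\rm o}(\gamma_t,\nu)$. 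Evaluating $W_2^2(\gamma_t,\nu)$ by integrating the same identity against $\boldsymbol{\nu}_t$, the first two terms are now \emph{lower} bounds, because $(\pi^0,\pi^2)_\#\boldsymbol{\nu}_t$ and $(\pi^1,\pi^2)_\#\boldsymbol{\nu}_t$ are merely admissible for $(\gamma_0,\nu)$ and $(\gamma_1,\nu)$; the decisive point is that the cross term $\int|x_0-x_1|^2\,\d\boldsymbol{\nu}_t$ equals $\int|x_0-x_1|^2\,\d\gamma = D$, because $\boldsymbol{\nu}_t$ shares the $(0,1)$-marginal of $\g$. This gives, for every $\nu$,
\[
W_2^2(\gamma_t,\nu) \geq (1-t)\,W_2^2(\gamma_0,\nu) + t\,W_2^2(\gamma_1,\nu) - t(1-t)\,D.
\]

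Since $D$ does not depend on $\nu$ and the maps $\nu \mapsto W_2^2(\gamma_i,\nu)$ are integrable against $|\lambda|$ under \eqref{eq:main_hyp}, I integrate this pointwise inequality over $\sigma$, using $\sigma(\X)=\alpha$, to control $\int W_2^2(\gamma_t,\nu)\,\d\sigma$ from below by $(1-t)\int W_2^2(\gamma_0,\nu)\,\d\sigma + t\int W_2^2(\gamma_1,\nu)\,\d\sigma - t(1-t)\alpha D$. Finally I would combine the two estimates in the definition \eqref{def:onepos} of $\mathscr{E}$: subtracting the $\sigma$-integral from $(1+\alpha)$ times the first bound, the linear-in-$t$ parts reassemble into $(1-t)\mathscr{E}(\gamma_0)+t\mathscr{E}(\gamma_1)$, while the $D$-terms collect with coefficient $-(1+\alpha)t(1-t)D + \alpha t(1-t)D = -t(1-t)D$. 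Using $D \geq W_2^2(\gamma_0,\gamma_1)$ then gives exactly
\[
\mathscr{E}(\gamma_t) \leq (1-t)\mathscr{E}(\gamma_0) + t\,\mathscr{E}(\gamma_1) - t(1-t)\,W_2^2(\gamma_0,\gamma_1),
\]
which is $1$-convexity along generalized geodesics with basepoint $\nu_0$.

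The main obstacle, and the one place where the argument must be set up with care, is ensuring that it is the \emph{same} $D$ appearing in both bounds; this is guaranteed only because the Curve Extension Lemma keeps the $(0,1)$-marginal of $\g$ fixed while optimizing the transport to $\nu$. It is precisely the cancellation $(1+\alpha)-\alpha = 1 = \lambda(\X)$ that produces the modulus, so the normalization $\lambda(\X)=1$ is exactly what makes the modulus equal to $1$; any attempt to route the several negative terms through different cross-energies would break this cancellation, which is the subtlety flagged in the remark about extending \cite{matthes-plazotta} to more than two measures.
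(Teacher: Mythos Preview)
Your proposal is correct and follows essentially the same argument as the paper: the Hilbert identity applied to $\g$ for the upper bound on $W_2^2(\gamma_t,\nu_0)$, the Curve Extension Lemma (keeping the $(0,1)$-marginal fixed so that the same cross energy $D$ appears) for the lower bound on each $W_2^2(\gamma_t,\nu)$, integration over $\sigma$, and the cancellation $(1+\alpha)-\alpha=1$ followed by $D\geq W_2^2(\gamma_0,\gamma_1)$. Your explicit emphasis on why the same $D$ appears in both estimates and on the role of the normalization $\lambda(\X)=1$ is a welcome clarification, but the route is the paper's.
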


\begin{proof}
    We need the equality, valid in Hilbert spaces:
    \begin{equation}
    \label{eq:hilbert}
    |(1-t)x_0 + tx_1 - x_2|^2 = (1-t)|x_0-x_2|^2 + t|x_1 - x_2|^2 - t(1-t)|x_0-x_1|^2.
    \end{equation}
    Let $\gamma_0, \gamma_1$ be in $\X$; let $\boldsymbol{\gamma}_{02}$ be in $\Gamma_{\rm 0}(\gamma_0, \nu_0)$ and $\boldsymbol{\gamma}_{12}$ be in $\Gamma_{\rm o}(\gamma_1, \nu_0)$. Let $\g$ be a measure in $\P_2(H \times H \times H)$ obtained by gluing the optimal transports along the common marginal $\nu_0$ and let $\gamma_t = (\pi^{0 \to 1}_t)_{\#} \boldsymbol{\gamma}$ be the associated generalized geodesic. Using \eqref{eq:hilbert} and the optimality of the marginals of $\boldsymbol{\gamma}$ we have
    \begin{equation*}
    \begin{split}
         W_2^2(\gamma_t, \nu_0) & \leq \int_{H \times H} |y_1 - y_2|^2 \d (\pi_t^{0\to 1},\pi^2)_\# \g (y_1,y_2)\\
        & = \int_{H \times H \times H} |(1-t)x_{0} + tx_1 - x_2|^2 \d\g(x_0,x_1,x_2)\\
        & = (1-t)W_2^2(\gamma_{0},\nu_0) + tW_2^2(\gamma_1,\nu_0) \\
        &\quad - (1-t)t \int_{H \times H \times H} |x_{0} - x_1|^2 \d\g(x_0,x_1,x_2).
    \end{split}
    \end{equation*}
    Now let $\nu \in \X$ and let $t \in [0,1]$. The curve extension lemma yields a measure $\boldsymbol{\nu}_t$ such that $(\pi^{0,1})_{\#} \boldsymbol{\nu}_t = (\pi^{0,1})_{\#}\g$ and $(\pi^{0\to 1}_t, \pi^2)_{\#} \boldsymbol{\nu}_t \in \Gamma_{\rm o}((\pi^{0\to1}_t)_{\#} \g, \nu)$. We have 
    \begin{equation*}
    \begin{split}
         W_2^2(\gamma_t, \nu) & = \int_{H \times H \times H} |(1-t)x_0 + tx_1 - x_2|^2 \d\boldsymbol{\nu}_t(x_0,x_1,x_2) \\
        & = (1-t) \int_{H \times H \times H} |x_0 - x_2|^2 \d\boldsymbol{\nu}_t \\
        &\quad + t \int_{H \times H \times H} |x_1 - x_2|^2 \d \boldsymbol{\nu}_t - t(1-t) \int_{H \times H \times H} |x_{0}-x_1|^2 \d\g \\
        & \geq (1-t) W_2^2(\gamma_{0}, \nu) + t W_2^2(\gamma_1, \nu) - t(1-t) \int_{H \times H \times H} |x_{0}-x_1|^2 \d\g.
    \end{split}
    \end{equation*}
    Using the two inequalities and the fact that $\sigma(\X)=\alpha$, we obtain
    \begin{equation*}
    \begin{split}
         \mathscr{E}(\gamma_t) &= (1+\alpha) W_2^2(\gamma_t, \nu_0) - \int_{\X}  W_2^2(\gamma_t, \nu)\d\sigma(\nu)\\
         &\leq (1-t) \mathscr{E}(\gamma_0) + t\mathscr{E}(\gamma_1) - t(1-t) \int_{H \times H \times H} |x_{0}-x_1|^2 \d\g \\
        &\leq (1-t) \mathscr{E}(\gamma_0) + t\mathscr{E}(\gamma_1) - t (1-t) W_2^2(\gamma_0, \gamma_1),
    \end{split}
    \end{equation*}
    which proves that $\mathscr{E}$ is $1-$convex along generalized geodesics with basepoint $\nu_0$.
\end{proof}
\begin{remark}
\label{rem:basepoints}
    Note that having only one positive weight is crucial: if we were to try to generalize the argument to $n\geq 2$ positive weights, we would have to construct $n$ different generalized geodesics $\gamma_t$ with basepoints equal to the different measures associated with the positive weights. 
\end{remark}

\subsection{Proof of Theorem \ref{th:newmain}-(ii)}
Now we repeat the informal argument presented above to show that $\mathscr{E}$ admits a unique minimizer.

\begin{corollary}\label{cauchy}
    Let $(\mu_i)_{i \in \mathbb{N}}$ be a minimizing sequence for $\mathscr{E}$. Then it is Cauchy with respect to $W_2$.
\end{corollary}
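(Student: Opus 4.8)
The plan is to make rigorous the abstract $\lambda$-convexity argument already sketched at the beginning of Section \ref{sec:unique}, now that the previous theorem guarantees that $\mathscr{E}$ is $1$-convex along generalized geodesics with basepoint $\nu_0$. In essence all the real work has been absorbed into that convexity statement, so this corollary is a now-routine application of it.

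First I would record that $\mathscr{E}$ is bounded from below. Writing $\lambda=\lambda^+-\lambda^-$ with $\lambda^+=(1+\alpha)\delta_{\nu_0}$ and $\lambda^-=\sigma$, the functional \eqref{def:onepos} coincides with the general functional of \eqref{eq:functionalE}, so Lemma \ref{lemma:lower} applies and yields $\underline{\mathscr{E}}:=\inf_{\mu\in\X}\mathscr{E}(\mu)>-\infty$. Hence the notion of a minimizing sequence is meaningful, and we have $\mathscr{E}(\mu_i)\to\underline{\mathscr{E}}$ as $i\to\infty$.

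Next, for fixed indices $m,n\in\N$, I would connect $\gamma_0:=\mu_m$ and $\gamma_1:=\mu_n$ by a generalized geodesic $(\gamma_s)_{s\in[0,1]}$ with basepoint $\nu_0$, exactly as in the definition above: glue optimal plans in $\Gamma_{\rm o}(\mu_m,\nu_0)$ and $\Gamma_{\rm o}(\mu_n,\nu_0)$ along the common marginal $\nu_0$ to obtain $\g$, and set $\gamma_s=(\pi^{0\to1}_s)_{\#}\g$. Applying the $1$-convexity inequality of the previous theorem at $s=1/2$ gives
\[
\mathscr{E}(\gamma_{1/2})\leq \tfrac12\mathscr{E}(\mu_m)+\tfrac12\mathscr{E}(\mu_n)-\tfrac14\,W_2^2(\mu_m,\mu_n).
\]
Since $\gamma_{1/2}\in\X$ we have $\mathscr{E}(\gamma_{1/2})\geq\underline{\mathscr{E}}$, and rearranging yields
\[
W_2^2(\mu_m,\mu_n)\leq 4\left[\tfrac12\bigl(\mathscr{E}(\mu_m)+\mathscr{E}(\mu_n)\bigr)-\underline{\mathscr{E}}\right].
\]

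Finally I would let $m,n\to\infty$: because $\mathscr{E}(\mu_m)\to\underline{\mathscr{E}}$ and $\mathscr{E}(\mu_n)\to\underline{\mathscr{E}}$, the right-hand side tends to $0$, so $(\mu_i)_{i\in\N}$ is a Cauchy sequence for $W_2$. I do not expect any genuine obstacle here, since the entire analytic content was shifted into the $1$-convexity theorem; the only points requiring a word of care are the boundedness from below (needed to make $\underline{\mathscr{E}}$ finite, supplied by Lemma \ref{lemma:lower}) and the observation that the midpoint $\gamma_{1/2}$ really lies in $\X$ so that $\mathscr{E}(\gamma_{1/2})\geq\underline{\mathscr{E}}$, both of which are immediate from the construction of the generalized geodesic.
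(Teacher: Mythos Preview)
Your argument is correct and follows essentially the same route as the paper: apply the $1$-convexity inequality at $s=1/2$ to a generalized geodesic with basepoint $\nu_0$ connecting $\mu_m$ and $\mu_n$, bound $\mathscr{E}(\gamma_{1/2})$ from below by the infimum, and let $m,n\to\infty$. If anything, you are slightly more explicit than the paper in citing Lemma~\ref{lemma:lower} for the lower bound and in noting that $\gamma_{1/2}\in\X$.
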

\begin{proof}
    We have already shown that $\inf \mathscr{E} > - \infty$. Let $l = \inf \mathscr{E}$ so that $\mathscr{E}(\mu_i) \to l$. Pick $\mu_m$, $\mu_n$ and let $\gamma_s$ be a generalized geodesic connecting them. For $s=\frac12$ we obtain
    $$
    \mathscr{E}(\gamma_{1/2}) \leq \frac{1}{2}(\mathscr{E}(\mu_m) + \mathscr{E}(\mu_n)) - \frac{1}{4}W_2^2(\mu_m,\mu_n)$$

    so that 
    $$
    \frac{1}{4}W_2^2(\mu_m,\mu_n) \leq \frac{1}{2}(\mathscr{E}(\mu_m) + \mathscr{E}(\mu_n)) - \mathscr{E}(\gamma_{1/2}) \leq \frac{1}{2}(\mathscr{E}(\mu_m) + \mathscr{E}(\mu_n)) - l
    $$
    and, since the right-hand term tends to $0$ as $m,n \to +\infty$, it follows that $(\mu_i)_{i \in \N}$ is Cauchy.
\end{proof}

Since $\mathscr{E}$ is continuous with respect to the metric $W_2$, we finally obtain the desired result. Here the positive part of $\lambda$ is concentrated on $\nu_0\in\X$, as in \eqref{def:onepos}.

\begin{proof}[Proof of Theorem \ref{th:newmain}-(ii)]
    Pick a minimizing sequence $(\mu_i)_{i \in \mathbb{N}}$: by Corollary \ref{cauchy}, it is a Cauchy sequence with respect to the metric $W_2$; since $(\P_2(X), W_2)$ is a complete metric space, there is a limit $\mu$. Then, by continuity of $\mathscr{E}$, 
    $$\inf \mathscr{E} = \lim_{i\to \infty} \mathscr{E}(\mu_i) = \mathscr{E}(\mu).$$ 
    
    Now suppose that there are two minimum points $\eta_1$ and $\eta_2$ for $\mathscr{E}$. Let $\gamma_s$ be a generalized geodesic with basepoint $\nu_0$. Then, for $s=1/2$,
    $$
    \frac{1}{4} W_2^2(\eta_1,\eta_2)\leq \frac{1}{2} (\mathscr{E}(\eta_1) + \mathscr{E}(\eta_2)) -\mathscr{E}(\gamma_{1/2}) \leq 0
    $$
    so that $W_2(\eta_1,\eta_2) = 0$: therefore $\eta_1 = \eta_2$.
\end{proof}


\subsection{Counterexample to uniqueness in the case of two positive weights}
\label{ssec:example}
We look at a particular example in $\P_2(\mathbb{R}^2)$: let 
\[
    \nu_0 = \delta_{(0,0)},\quad \nu_1 = \frac12 (\delta_{(-1,-1)} + \delta_{(1,1)}), \quad \nu_2 = \frac12 (\delta_{(1,-1)} + \delta_{(-1,1)}),
\]
and consider the functional
$$
\mathscr{E}(\mu) = - W_2^2(\mu, \nu_0) + W_2^2(\mu,\nu_1) + W_2^2(\mu,\nu_2).
$$
Owing to Theorem \ref{th:newmain}, $\mathscr E$ admits a minimizer in $\P_2(\mathbb{R}^2)$; we are going to prove that uniqueness does not hold.

The argument will employ the symmetry of the problem; for convenience, we define the matrix
\[
R = \begin{pmatrix}
    0 & 1 \\
    1 & 0
    \end{pmatrix}
\]
and the subsets
\[
    V = \left\{(x,y) \in \R^2 :  |y| > |x| \right\}  \quad \text{and}\quad
	U = \left\{(x,y) \in \R^2 :  |x| > |y| \right\}.
\]
\begin{center}
\begin{tikzpicture}
    \filldraw (0,0) circle (2pt);
    \draw (-1,-1) circle (1.3pt);
    \filldraw[dotted] (-1,1) circle (1.3pt);
    \filldraw[dotted] (1,-1) circle (1.3pt);
    \draw (1,1) circle (1.3pt);

    \draw[dotted,thick,->] (0,-1) -- (0,1);
    \draw[dotted,thick,->] (-1,0) -- (1,0);

	\node at (.3,.2) {$\nu_0$};
    \node at (.7,.8) {$\nu_1$};
    \node at (.7,-.8) {$\nu_2$};
    \node at (-.7,-.8) {$\nu_1$};
    \node at (-.7,.8) {$\nu_2$};
\end{tikzpicture}
\hspace{2cm}
\begin{tikzpicture}
    \filldraw (0,0) circle (1pt);
    \filldraw (-1,-1) circle (1pt);
    \filldraw (-1,1) circle (1pt);
    \filldraw (1,-1) circle (1pt);
    \filldraw (1,1) circle (1pt);

    \draw[thick] (0,0) -- (-1,-1);
    \draw[thick] (0,0) -- (-1,1);
    \draw[thick] (0,0) -- (1,-1);
    \draw[thick] (0,0) -- (1,1);

    \node at (0,.6) {$V$};
    \node at (.6,0) {$U$};
    \node at (0,-.6) {$V$};
    \node at (-.6,0) {$U$};
\end{tikzpicture}
\captionof{figure}{The fixed measured $\nu_0,\nu_1$, and $\nu_2$ (left); the subdivision of $\R^2$ using $U$ and $V$ (right).}
\label{fig3}
\end{center}
Denoting $\mathbf{x}=(x,y)\in \R^2,$ we consider the functions $S,T: \mathbb{R}^2 \to \mathbb{R}^2$,
\[
	S(\mathbf{x})= -(R\mathbf{x})\mathds{1}_V(\mathbf{x}) +\mathbf{x}\mathds{1}_{V^c}(\mathbf{x}),\quad \text{and}\quad T(\mathbf{x})= (R\mathbf{x})\mathds{1}_U(\mathbf{x}) +\mathbf{x}\mathds{1}_{U^c}(\mathbf{x}),
\]
where, for $A\subset \R^2$, $\mathds{1}_A$ is the characteristic function of the set $A$ and $A^c=\R^2\setminus A$. Let $\mu$ be a minimizer of $\mathscr{E}$; we assume that $\mu$ is unique and argue by contradiction. Owing to the symmetry of $\nu_0, \nu_1$, and $\nu_2$, (see Fig. \ref{fig3}), it can be  easily checked that
\[
	\mathscr E(S_\sharp \mu)  = \mathscr E(\mu) =	\mathscr E(T_\sharp \mu).
\]
By the uniqueness of $\mu$, we infer that $S_\sharp \mu = \mu$, and therefore supp$(\mu)$=supp$(S_\sharp \mu)\subset V^c$. Similarly $T_\sharp \mu = \mu$, and therefore supp$(\mu)$=supp$(T_\sharp \mu)\subset U^c$. In conclusion,
\[
	\text{supp}(\mu)\subset V^c \cap U^c = \left\{ (x,y)\in \R^2 : |x| = |y|\right\}.
\]
For all points $p$ in $\{|x| = |y| \}$ let us consider the function 
\[
	f(p) = \min \left\{|p - (-1,1)|^2, |p-(1,-1)|^2 \right\} + \min \left\{|p - (-1,-1)|^2, |p-(1,1)|^2 \right\} - |p|^2.
\]	 
Then 
\[
	\int_{\R^2} f(p) d\mu \leq \mathscr{E}(\mu).
\]
Assume that $p$ has the form $p=(x,x)$, then
\[
	|p - (-1,1)|^2 =  |p|^2 +2 = |p-(1,-1)|^2.
\]
The same computation, with respect to the support of $\nu_1$, holds if $p$ lies on $y=-x$. Therefore, for all measures $\mu$ concentrated on $\{|x| = |y| \}$, 
\[
	\mathscr{E}(\mu) \geq \int_{\R^2}\left( |p|^2+2-|p|^2\right)\d\mu(p)=2.
\]	
On the other hand, if we take $\eta = \frac12 \delta_{(0,1)} + \frac12 \delta_{(0,-1)}$, we have $\mathscr{E}(\eta) = 1$, which contradicts the assumption of $\mu$ being the minimizer of $\mathscr{E}$. We conclude that $\mathscr E$ does not admit a unique minimizer.


\section{Proof of Theorem \ref{th:gamma}}
\label{sec:gamma}
Let $(\lambda_k)_{k\in\N}$ be a sequence of signed measures on $\X$, such that 
\[	
	\lambda_k(\X)=1,\qquad M_2^2(|\lambda_k|)=\int_{\X}m_2^2(\nu)\, \d|\lambda_k|(\nu)<+\infty.
\]
As in the previous sections, the functional $\mathcal{E}_k:\X \to \R$ can be decomposed as
\begin{align*}
	\mathcal{E}_k(\mu)&= \int_{\X} W_2^2(\nu,\mu)\d\lambda_k(\nu) \\
		&=m_2^2(\mu) + \int_{\X} \left(\mathcal R(\nu,\mu) +m_2^2(\nu)\right)\d\lambda_k(\nu).
\end{align*}
We briefly recall the definitions equi-coercivity and of $\Gamma$-convergence, in the form we are going to employ. For a comprehensive treatise of the subject, we refer to  \cite{dalmaso}.
\begin{definition}
	We say that the sequence of functionals $\mathcal{E}_k:\X \to \R$ is \emph{equi-coercive} on $\P_2^w(H)$, if for all $t\in \R$ there exists a set $K_t \subset \X$, such that $K_t$ is relatively compact with respect to the topology of $\P_2^w(H)$ and $\{\mathcal E_k \leq t\} \subseteq K_t$.
\end{definition}
Owing to Corollary \ref{cor:NS}-(ii), in order to show the equi-coercivity of $(\mathcal E_k)$, it is equivalent to show that for all $t \in \R$ there exists a constant $C(t)>0$:
\[
	\text{if }\mu \in \{\mathcal E_k \leq t\}, \quad\text{then}\quad m_2^2(\mu)\leq C(t).
\] 
\begin{definition}	
	We say that the sequence of functionals $\mathcal{E}_k:\X \to \R$ $\Gamma$-converges to the functional $\mathcal{E}:\X \to \R$ in $\X$, with respect to the weak and strong topologies (or that $\mathcal E_k$ Mosco-converges to $\mathcal E$), if
	\begin{align*}
		&(i)\ \liminf_{k \to \infty} \mathcal E_k(\mu_k) \geq \mathcal E(\mu)\qquad \forall \mu_k \to \mu\quad \text{in }P_2^w(H);\\
		&(ii)\ \forall \mu\in \X \ \exists \mu_k \to \mu\ \text{in }(\P_2(H),W_2): \limsup_{k \to \infty}\mathcal E_k(\mu_k) \leq \mathcal E(\mu).
	\end{align*}	
\end{definition}

From now on we denote $X:=(\X,W_2)$ (i.e., the metric space $\X$, endowed with the 2-Wasserstein distance). Notice that \eqref{hyp:moment} implies (see, e.g., \cite[Lemma 6.1.5]{ambrosio-gigli-savare}) that $\nu \mapsto m_2^2(\nu)$ is uniformly integrable with respect to $(\lambda_k^+)$ and $(\lambda_k^-)$. Recalling that, by Lemma \ref{lemma:Rcont},  for all $\mu \in \X$ the function $\nu \mapsto \mathcal{R}(\nu,\mu)$ is continuous in $\P^w_2(H)$, and thus in $X$, we  immediately obtain that for all $\mu \in \X$,
\begin{equation}
\label{eq:contW}
	\lim_{k\to \infty} \int_{X} W_2^2(\mu,\nu)\d\lambda_k^{\pm}(\nu) = \int_{X} W_2^2(\mu,\nu)\d\lambda^{\pm}(\nu).
\end{equation}
In order to prove the $\Gamma$-liminf inequality, we need Skorohod's representation theorem (see, e.g., \cite[Theorem 8.5.4]{bogachev}), which we report here using our notation.
\begin{theorem}[Skorohod]
\label{th:skorohod}
Let $X$ be a complete, separable, metric space. Then, to every Borel probability measure $\mu$ on $X$, one can associate a Borel mapping $\xi_\mu: [0,1] \to X$ such that $\mu = (\xi_\mu)_\sharp \mathcal L$, where $\mathcal L$ is the Lebesgue measure, and 
\[
	\xi_{\mu_n}(t) \to 	\xi_{\mu}(t)\qquad \text{for almost all }t\in[0,1]
\]
whenever $\mu_n \to \mu$ narrowly in $\P(X)$.
\end{theorem}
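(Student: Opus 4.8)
The plan is to build the maps $\xi_\mu$ for all $\mu$ at once from a single refining family of countable Borel partitions of $X$, and then to read off the almost-everywhere convergence from the convergence of the masses of the partition cells. Using separability, I would first fix, for every $m\in\N$, a countable Borel partition $\mathscr P_m=\{A_{m,i}\}_{i}$ of $X$ into sets of diameter at most $2^{-m}$, arranged so that $\mathscr P_{m+1}$ refines $\mathscr P_m$ and the children of each cell are listed in a fixed order; such partitions are obtained from a countable dense set by taking suitable differences of balls. Completeness of $X$ will be used only at the end, to guarantee that the points produced by the construction actually lie in $X$.

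Given $\mu$, I would carve $[0,1]$ into consecutive half-open intervals $I_{m,i}(\mu)$ of length $\mu(A_{m,i})$, nested so that $I_{m+1,j}(\mu)\subset I_{m,i}(\mu)$ whenever $A_{m+1,j}\subset A_{m,i}$. Fixing a distinguished point $a_{m,i}\in\overline{A_{m,i}}$, I define the simple maps $\xi_\mu^m(t):=a_{m,i}$ for $t\in I_{m,i}(\mu)$. For every $t$ the values $\xi_\mu^m(t)$ lie in nested cells of diameter $\le 2^{-m}$, so $(\xi_\mu^m(t))_m$ is Cauchy and, by completeness, converges to a point $\xi_\mu(t)\in X$; the limit map is Borel because each $\xi_\mu^m$ is. The identity $(\xi_\mu)_\sharp\mathcal L=\mu$ follows because $\xi_\mu$ sends $I_{m,i}(\mu)$ into $\overline{A_{m,i}}$, so $(\xi_\mu)_\sharp\mathcal L$ and $\mu$ assign the same mass $|I_{m,i}(\mu)|=\mu(A_{m,i})$ to each cell, and the cells form a $\pi$-system generating the Borel $\sigma$-algebra with vanishing diameter.

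For the convergence assertion, observe that the endpoints of $I_{m,i}(\mu)$ are the partial sums $\sum_{j\le i}\mu(A_{m,j})$. If $\mu_n\to\mu$ narrowly and each $A_{m,i}$ is a $\mu$-continuity set, the portmanteau theorem gives $\mu_n(A_{m,i})\to\mu(A_{m,i})$, hence the interval endpoints converge; then for every $t$ outside the countable set of limiting endpoints and every fixed $m$ one has $t\in I_{m,i}(\mu_n)=I_{m,i}(\mu)$ for all large $n$, so $\xi_{\mu_n}^m(t)=\xi_\mu^m(t)$ eventually. A diagonal argument over $m$ then yields $\xi_{\mu_n}(t)\to\xi_\mu(t)$ for almost every $t$.

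The hard part is precisely the hypothesis that every cell be a $\mu$-continuity set: the partitions $\mathscr P_m$ are fixed in advance, whereas the limit $\mu$ is arbitrary, and no single family of partitions can consist of continuity sets for all measures simultaneously. The standard way around this is to let the radii defining the cells depend on $\mu$. For each center only countably many radii $r$ satisfy $\mu(\partial B(x,r))>0$, so one can select radii producing $\mu$-null cell boundaries, and do so measurably in $\mu$; the delicate point is to organize this selection and the ordering of the cells so that the resulting interval endpoints still converge along $\mu_n\to\mu$. This is the content of the Blackwell--Dubins strengthening of Skorohod's theorem and is the only genuinely subtle step; everything else reduces to the routine estimates sketched above.
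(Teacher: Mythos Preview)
The paper does not prove this statement: it is quoted verbatim as Skorohod's theorem with a reference to \cite[Theorem 8.5.4]{bogachev}, and is used only as a black box in the proof of Lemma~\ref{lemma:gammaR}. There is therefore no paper argument to compare against.

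As to your sketch itself: the overall architecture is the right one, and you have correctly recognized that the version stated here is the \emph{universal} (Blackwell--Dubins) form, in which the map $\xi_\mu$ depends only on $\mu$ and not on any particular approximating sequence. But your final paragraph gives the game away. Having pinpointed that the partition cells must be $\mu$-continuity sets for every possible limit $\mu$ simultaneously, you resolve the difficulty by invoking ``the Blackwell--Dubins strengthening of Skorohod's theorem''---which is exactly the statement you are trying to prove. So the sketch is circular at its only nontrivial step.

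The concrete issue is this: once the radii, and hence the cells $A_{m,i}=A_{m,i}(\mu)$, depend on the measure, the map $\xi_{\mu_n}$ is built from $A_{m,i}(\mu_n)$, not from $A_{m,i}(\mu)$. Convergence of the interval endpoints is then no longer a matter of $\mu_n(A)\to\mu(A)$ for a fixed continuity set $A$; you need to compare $\mu_n(A_{m,i}(\mu_n))$ with $\mu(A_{m,i}(\mu))$, which requires the radius-selection rule to be suitably continuous (e.g., lower semicontinuous) in the measure, and the nested ordering of children to be stable under narrow convergence. Carrying this out---showing that such a selection exists and that the interval structure survives the limit---is the entire content of the theorem, and your sketch does not attempt it. Everything prior to that paragraph is routine and correct, but without this step the argument does not close.
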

We are going to apply Skorohod's representation to address the $\Gamma$-liminf of the term involving $\mathcal R(\nu,\mu)$.
\begin{lemma}
\label{lemma:gammaR}
Let $(\eta_k)_{k\in \N}$ and $\eta$ be Borel probability measures on $X=(\X,W_2)$, such that $\eta_k \to \eta$ narrowly and
\begin{equation}
\label{eq:m2conv}
	\lim_{k\to \infty} \int_{X}m_2^2(\nu)\,\d\eta_k(\nu) = \int_{X}m_2^2(\nu)\,\d\eta(\nu) < \infty.
\end{equation}
Then, for all $(\mu_k)_{k \in \N}$ and $\mu$ in $\X$ such that $\mu_k \to \mu$ with respect to the weak topology of $P_2^w(H)$,
\[
	\lim_{k \to \infty} \int_X \mathcal{R}(\nu,\mu_k)\d\eta_k(\nu) = \int_X \mathcal{R}(\nu,\mu)\d\eta(\nu).
\]
\end{lemma}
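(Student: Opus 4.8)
The plan is to reduce the statement to an almost-everywhere convergence on the unit interval, where the pointwise continuity of $\mathcal R$ from Lemma \ref{lemma:Rcont} can be combined with a dominated-convergence argument. Since $X=(\X,W_2)$ is complete and separable and $\eta_k\to\eta$ narrowly, Skorohod's Theorem \ref{th:skorohod} furnishes Borel maps $\xi_k,\xi:[0,1]\to X$ with $\eta_k=(\xi_k)_\sharp\mathcal L$, $\eta=\xi_\sharp\mathcal L$, and $\xi_k(t)\to\xi(t)$ in the metric of $X$, that is $W_2(\xi_k(t),\xi(t))\to 0$, i.e.\ \emph{strong} convergence in $\X$, for almost every $t\in[0,1]$. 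By the change-of-variables formula the two integrals in the statement become $\int_0^1\mathcal R(\xi_k(t),\mu_k)\,\d t$ and $\int_0^1\mathcal R(\xi(t),\mu)\,\d t$, so it suffices to pass to the limit under the integral sign on $[0,1]$.

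I would then fix a $t$ in the full-measure set where $\xi_k(t)\to\xi(t)$ strongly. Along such a $t$, the sequence $(\xi_k(t))_k$ converges strongly in $\X$ while $(\mu_k)_k$ converges weakly in $\P_2^w(H)$ by hypothesis, so Lemma \ref{lemma:Rcont} applies verbatim and gives the pointwise limit $\mathcal R(\xi_k(t),\mu_k)\to\mathcal R(\xi(t),\mu)$. For integrability I would use the elementary bound coming from Young's inequality (as in Lemma \ref{lemma:lower} with $\delta=1$), namely $|\mathcal R(\nu,\mu)|\le m_2^2(\nu)+m_2^2(\mu)$; together with $S:=\sup_k m_2^2(\mu_k)<\infty$, which is guaranteed by Corollary \ref{cor:NS}-(i) because $\mu_k\to\mu$ in $\P_2^w(H)$, this yields $|\mathcal R(\xi_k(t),\mu_k)|\le m_2^2(\xi_k(t))+S$ for a.e.\ $t$.

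The main obstacle is that this dominating function $t\mapsto m_2^2(\xi_k(t))+S$ depends on $k$, so the ordinary dominated convergence theorem does not apply directly; I would instead invoke its generalized, variable-dominant version. Writing $G_k(t):=m_2^2(\xi_k(t))+S$ and $G(t):=m_2^2(\xi(t))+S$, one has $G_k\to G$ almost everywhere, because $m_2(\xi_k(t))=W_2(\xi_k(t),\delta_0)\to W_2(\xi(t),\delta_0)=m_2(\xi(t))$ by the triangle inequality, and moreover $\int_0^1 G_k\,\d t\to\int_0^1 G\,\d t$, since by the change of variables and hypothesis \eqref{eq:m2conv} we have $\int_0^1 m_2^2(\xi_k(t))\,\d t=\int_X m_2^2(\nu)\,\d\eta_k(\nu)\to\int_X m_2^2(\nu)\,\d\eta(\nu)=\int_0^1 m_2^2(\xi(t))\,\d t$. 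With $h_k:=\mathcal R(\xi_k(\cdot),\mu_k)\to h:=\mathcal R(\xi(\cdot),\mu)$ a.e., $|h_k|\le G_k$, $G_k\to G$ a.e., and $\int_0^1 G_k\to\int_0^1 G<\infty$, the generalized theorem (which follows from Fatou applied to the nonnegative functions $G_k\pm h_k$) gives $\int_0^1\mathcal R(\xi_k,\mu_k)\,\d t\to\int_0^1\mathcal R(\xi,\mu)\,\d t$, which is precisely the claim. The only routine points left to verify are the measurability of $t\mapsto\mathcal R(\xi_k(t),\mu_k)$ and the transfer of \eqref{eq:m2conv} to $[0,1]$, both immediate once the Skorohod representation is in place.
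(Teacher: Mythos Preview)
Your proof is correct and follows essentially the same route as the paper's: Skorohod representation, pointwise convergence via Lemma~\ref{lemma:Rcont}, and domination by $m_2^2(\xi_k(t))+S$. The only cosmetic difference is in the last step, where the paper first deduces $m_2^2(\xi_k(\cdot))\to m_2^2(\xi(\cdot))$ strongly in $L^1(0,1)$ and then invokes dominated convergence, while you apply the variable-dominant (Pratt) version of DCT directly; these are equivalent formulations of the same argument.
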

\begin{proof}
According to Skorohod's representation theorem, there exist Borel mappings $\xi_k,\xi : [0,1] \to X$ such that
\[
	(\xi_k)_\sharp \mathcal L = \eta_k,\quad 	\xi_\sharp \mathcal L = \eta,
\]
and $\xi_k(t) \to 	\xi(t)$ for almost all $t\in[0,1]$, i.e., 
\begin{equation}
\label{eq:xiconv}
	\lim_{k \to \infty} W_2(\xi_k(t),\xi(t)) =0,\qquad \text{for almost all }t\in[0,1].
\end{equation}
Therefore
\begin{equation}
\label{eq:skor}
	\int_{X} m_2^2(\nu) \d\eta_k(\nu) = \int_0^1 m_2^2(\xi_k(t)) \d t
\end{equation}
and
\[
	\int_{X} \mathcal R(\nu,\mu_k) \d\eta(\nu) = \int_0^1 \mathcal R(\xi_k(t),\mu_k) \d t.
\]
Define
\[
	f_k:[0,1]\to [0,+\infty),\qquad f_k(t) := m_2^2(\xi_k(t))=  \int_H |x|^2\d \xi_k(t)(x) \geq 0.
\]
Owing to \eqref{eq:xiconv}, we first remark that $f_k$ converges for almost all $t\in [0,1]$:
\[
	\lim_{k \to \infty} f_k(t) = \lim_{k \to \infty}  \int_H |x|^2\d \xi_k(t)(x) = \int_H |x|^2\d \xi(t)(x)= :f(t).
\]
Then, owing to \eqref{eq:skor} and \eqref{eq:m2conv} 
\[
	\lim_{k \to \infty} \int_0^1 f_k(t)\d t  = \lim_{k \to \infty} \int_{X} m_2^2(\nu) \d\eta_k(\nu) = \int_{X} m_2^2(\nu) \d\eta(\nu) = \int_0^1 f(t)\d t.
\]	
Since $f_k \to f$ almost everywhere and $\|f_k\|_{L^1} \to \|f\|_{L^1}$, we conclude that 
\begin{equation}
\label{eq:L1} 
	f_k \to f,\qquad \text{strongly in }L^1(0,1).
\end{equation}	
Since $\mu_k \to \mu$ (weakly) in $\P_2^w(H)$ and for almost all $t\in [0,1]$ $\xi_k(t) \to \xi(t)$ (strongly) in $\X$, by Lemma \ref{lemma:Rcont}
\[
	\lim_{k \to \infty} \mathcal R(\xi_k(t),\mu_k) = \mathcal R(\xi(t),\mu)\qquad \text{a.e. in }[0,1].
\]
Denoting $\gamma_k$ an optimal transport plan in $\Gamma_{\rm o}(\xi_k(t),\mu_k)$, we obtain
\begin{align*}
	|\mathcal R(\xi_k(t),\mu_k) | &= \left| 2\int_{H \times H} x\cdot y\, \d\gamma_k(x,y)\right | \\
		&\leq \left| \int_{H \times H} |x|^2+|y|^2 \d\gamma_k(x,y)\right | \\
		&=  m_2^2(\xi_k(t)) + m_2^2(\mu_k).
\end{align*}	
Since $m_2^2(\xi_k(t))=f_k(t)$ converges strongly in $L^1(0,1)$ and $m_2^2(\mu_k)$ is bounded by hypothesis (since $\mu_k \to \mu$ in $\P_2^w(H)$), we may conclude by dominated convergence that
\begin{align*}
    \lim_{k \to \infty}  \int_X \mathcal{R}(\nu,\mu_k) \d \eta_k(\nu) & = \lim_{k \to \infty} \int_0^1 \mathcal{R}(\xi_k(t),\mu_k) \d t \\
    & = \int_0^1 \mathcal{R}(\xi(t),\mu) \d t = \int_X \mathcal{R}(\nu,\mu) \d \eta(\nu).
\end{align*}
\end{proof}

\begin{lemma}[$\Gamma$-liminf]
\label{lemma:gammainf}
Let $(\lambda_k)$ and $\lambda$ be as in the hypotheses of Theorem \ref{th:gamma}. Then, for all $(\mu_k)$ and $\mu$ in $\X$ such that $\mu_k \to \mu$ with respect to the weak topology of $P_2^w(H)$,
\[
	\liminf_{k \to \infty} \mathcal{E}_k(\mu_k) \geq \mathcal{E}(\mu).
\]
\end{lemma}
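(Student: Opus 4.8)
The plan is to work from the decomposition
\[
\mathcal{E}_k(\mu_k) = m_2^2(\mu_k) + \int_X m_2^2(\nu)\,\d\lambda_k(\nu) + \int_X \mathcal{R}(\nu,\mu_k)\,\d\lambda_k(\nu),
\]
which follows from $W_2^2(\nu,\mu) = m_2^2(\nu) + m_2^2(\mu) + \mathcal{R}(\nu,\mu)$ together with $\lambda_k(\X)=1$. I would show that the last two terms \emph{converge} to their counterparts for $\lambda$, while the first is merely lower semicontinuous. First I would dispose of the $m_2^2(\mu_k)$ term: since $m_2^2(\cdot)=W_2^2(\cdot,\delta_0)$ is lower semicontinuous on $\P_2^w(H)$ (by \cite[Theorem 5.1]{naldi-savare}, as already used in Section~\ref{sec:hilbert}), the weak convergence $\mu_k\to\mu$ gives $\liminf_k m_2^2(\mu_k)\geq m_2^2(\mu)$.

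The crucial preliminary step, which I expect to be the main obstacle, is to upgrade the total-variation moment convergence \eqref{hyp:moment} to the separate convergences
\[
\int_X m_2^2(\nu)\,\d\lambda_k^{\pm}(\nu)\longrightarrow \int_X m_2^2(\nu)\,\d\lambda^{\pm}(\nu).
\]
Since $m_2^2$ is nonnegative and lower semicontinuous on $(\X,W_2)$, the narrow convergences $\lambda_k^{\pm}\to\lambda^{\pm}$ from \eqref{hyp:wconv} yield, via the portmanteau inequality for nonnegative lower semicontinuous integrands, $\liminf_k\int m_2^2\,\d\lambda_k^{+}\geq\int m_2^2\,\d\lambda^{+}$ and the analogous inequality for the negative parts. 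Adding these and comparing with the \emph{equality} $\int m_2^2\,\d|\lambda_k|\to\int m_2^2\,\d|\lambda|$ of \eqref{hyp:moment}, a squeeze argument (if $a_k+b_k\to a+b$ with $\liminf a_k\geq a$ and $\liminf b_k\geq b$, then $a_k\to a$ and $b_k\to b$) forces both liminf's to be genuine limits and equalities. In particular $\int m_2^2\,\d\lambda_k=\int m_2^2\,\d\lambda_k^+-\int m_2^2\,\d\lambda_k^-\to\int m_2^2\,\d\lambda$.

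For the remaining term I would apply Lemma~\ref{lemma:gammaR} separately to the positive and negative parts. Testing \eqref{hyp:wconv} with $\varphi\equiv1$ gives $c_k^{\pm}:=\lambda_k^{\pm}(\X)\to\lambda^{\pm}(\X)=:c^{\pm}$, with $c^+=1+c^-\geq1>0$; hence the normalized measures $\lambda_k^{\pm}/c_k^{\pm}$ are probability measures on $X$ converging narrowly to $\lambda^{\pm}/c^{\pm}$ and, by the previous step, with convergent $m_2^2$-moments. Lemma~\ref{lemma:gammaR} then yields $\int\mathcal{R}(\nu,\mu_k)\,\d(\lambda_k^{\pm}/c_k^{\pm})\to\int\mathcal{R}(\nu,\mu)\,\d(\lambda^{\pm}/c^{\pm})$, and multiplying back by $c_k^{\pm}\to c^{\pm}$ gives $\int\mathcal{R}(\nu,\mu_k)\,\d\lambda_k^{\pm}\to\int\mathcal{R}(\nu,\mu)\,\d\lambda^{\pm}$, whence $\int\mathcal{R}(\nu,\mu_k)\,\d\lambda_k\to\int\mathcal{R}(\nu,\mu)\,\d\lambda$.

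Finally, combining the three pieces and using that a liminf of a sum dominates the liminf of one summand plus the limits of the others,
\[
\liminf_{k\to\infty}\mathcal{E}_k(\mu_k)\geq m_2^2(\mu)+\int_X m_2^2(\nu)\,\d\lambda(\nu)+\int_X\mathcal{R}(\nu,\mu)\,\d\lambda(\nu)=\mathcal{E}(\mu),
\]
which is the desired $\Gamma$-liminf inequality. The only inequality (rather than equality) in the chain comes from the lower semicontinuity of the moment of $\mu_k$; all dependence on $\nu$ is handled by honest limits, which is precisely what makes the presence of the negative part $\lambda^-$ harmless here.
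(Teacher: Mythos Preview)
Your proof is correct and follows essentially the same route as the paper: the same decomposition of $\mathcal{E}_k$, the lower semicontinuity of $m_2^2(\mu_k)$, and the application of Lemma~\ref{lemma:gammaR} to the normalized positive and negative parts $\lambda_k^{\pm}/\lambda_k^{\pm}(X)$. Your squeeze argument for the separate convergences $\int m_2^2\,\d\lambda_k^{\pm}\to\int m_2^2\,\d\lambda^{\pm}$ is in fact a cleaner justification of the hypothesis~\eqref{eq:m2conv} of Lemma~\ref{lemma:gammaR} than the paper's brief appeal to uniform integrability; the only minor omission (shared by the paper) is the degenerate case $\lambda^-(\X)=0$, which is easily dispatched directly via the bound $|\mathcal{R}(\nu,\mu_k)|\leq m_2^2(\nu)+m_2^2(\mu_k)$.
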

\begin{proof}
We consider the Hahn-Jordan decomposition $\lambda_k=\lambda^+_k-\lambda^-_k$, $\lambda=\lambda^+-\lambda^-$, and notice that by \eqref{hyp:wconv}
\[
	\lim_{k \to \infty}\lambda^+_k(X)= \lambda^+(X),\qquad \lim_{k \to \infty}\lambda^-_k(X)= \lambda^-(X).
\]
Applying Lemma \ref{lemma:gammaR} to the sequences of probability measures
\[
	\eta^+_k=\frac{\lambda^+_k}{\lambda^+_k(X)}\qquad \text{and}\qquad \eta^-_k=\frac{\lambda^-_k}{\lambda^-_k(X)}
\]	 
we immediately obtain
\begin{align*}
	\lim_{k \to \infty}  &\int_X \mathcal{R}(\nu,\mu_k) \d \lambda_k(\nu) = 	\lim_{k \to \infty}  \int_X \mathcal{R}(\nu,\mu_k) \d \lambda^+_k(\nu) -	\lim_{k \to \infty}  \int_X \mathcal{R}(\nu,\mu_k) \d \lambda^-_k(\nu) \\
	&= \lim_{k \to \infty}  \lambda^+_k(X)\int_X \mathcal{R}(\nu,\mu_k) \d \eta^+_k(\nu) -	\lambda^-_k(X)\lim_{k \to \infty}  \int_X \mathcal{R}(\nu,\mu_k) \d \eta^-_k(\nu)\\
	&=  \int_X \mathcal{R}(\nu,\mu) \d \lambda^+(\nu) -	  \int_X \mathcal{R}(\nu,\mu) \d \lambda^-(\nu) = \int_X \mathcal{R}(\nu,\mu) \d \lambda(\nu).
\end{align*}
Recalling that
\begin{align*}
	\mathcal{E}_k(\mu_k)&= \int_{\X} W_2^2(\nu,\mu_k)\d\lambda_k(\nu) \\
		&=m_2^2(\mu_k) + \int_{\X} \left(\mathcal R(\nu,\mu_k) +m_2^2(\nu)\right)\d\lambda_k(\nu)
\end{align*}
    and that 
    \[
    \liminf_{k \to \infty} m_2^2(\mu_k) \geq m_2^2(\mu), \qquad \lim_{k\to \infty} \int_X m_2^2(\nu) \d \lambda_k(\nu) = \int_X m_2^2(\nu) \d \lambda (\nu),
    \]
    we obtain the thesis.
\
\end{proof}
\begin{lemma}[$\Gamma$-limsup]
\label{lemma:gammasup}
Let $(\lambda_k)$ and $\lambda$ be as in the hypotheses of Theorem \ref{th:gamma}. Then, for all  $\mu$ in $\X$ there exists a sequence $(\mu_k)$, converging to $\mu$ in $X$, such that
\[
	\limsup_{k \to \infty} \mathcal{E}_k(\mu_k) \leq \mathcal{E}(\mu).
\]
\end{lemma}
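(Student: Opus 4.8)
The plan is to show that the constant sequence $\mu_k\equiv\mu$ is already a recovery sequence, so that the $\Gamma$-limsup inequality holds in fact as an equality in the limit. Since $\mu_k=\mu$ trivially converges to $\mu$ in $X=(\X,W_2)$, it suffices to prove that $\lim_{k\to\infty}\mathcal{E}_k(\mu)=\mathcal{E}(\mu)$. I would start from the decomposition already used throughout this section,
\[
	\mathcal{E}_k(\mu) = m_2^2(\mu) + \int_X \mathcal{R}(\nu,\mu)\,\d\lambda_k(\nu) + \int_X m_2^2(\nu)\,\d\lambda_k(\nu),
\]
in which the first term does not depend on $k$, and then treat the two integral terms separately.

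For the coupling term I would invoke Lemma \ref{lemma:gammaR} exactly as in the proof of Lemma \ref{lemma:gammainf}, but now applied to the constant sequence $\mu_k=\mu$, which is in particular convergent to $\mu$ in $\P_2^w(H)$. Applying the lemma to the normalized probability measures $\eta^\pm_k=\lambda^\pm_k/\lambda^\pm_k(X)$, and using that $\lambda^\pm_k(X)\to\lambda^\pm(X)$ (from \eqref{hyp:wconv} with $\varphi\equiv 1$) together with the narrow convergence $\eta^\pm_k\to\eta^\pm:=\lambda^\pm/\lambda^\pm(X)$, yields
\[
	\lim_{k\to\infty}\int_X \mathcal{R}(\nu,\mu)\,\d\lambda_k(\nu) = \int_X \mathcal{R}(\nu,\mu)\,\d\lambda(\nu).
\]

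For the moment term I would argue that \eqref{hyp:moment} forces convergence of the moments against $\lambda_k$ itself, and not merely against $|\lambda_k|$. Since $\lambda^\pm_k\to\lambda^\pm$ narrowly and $\nu\mapsto m_2^2(\nu)=W_2^2(\nu,\delta_0)$ is nonnegative and continuous, hence lower semicontinuous, on $X$, the Portmanteau inequality gives $\liminf_k\int m_2^2\,\d\lambda^\pm_k\geq\int m_2^2\,\d\lambda^\pm$; as the sum of these two integrals converges by \eqref{hyp:moment} to the sum of the two limits, an elementary liminf-plus-summation argument shows that both liminf's are in fact full limits, whence $\int_X m_2^2(\nu)\,\d\lambda_k(\nu)\to\int_X m_2^2(\nu)\,\d\lambda(\nu)$. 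Combining the two displays with the $k$-independence of $m_2^2(\mu)$ gives $\mathcal{E}_k(\mu)\to\mathcal{E}(\mu)$, and in particular $\limsup_{k\to\infty}\mathcal{E}_k(\mu)\leq\mathcal{E}(\mu)$, as required.

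I do not expect a genuine obstacle here: the substance of the argument, namely the continuity of $\mathcal{R}$ under strong--weak convergence and the Skorohod-based passage to the limit, has already been carried out in Lemmas \ref{lemma:Rcont} and \ref{lemma:gammaR}, and the constant sequence sidesteps any construction. The only point requiring a little care is the splitting of the moment convergence across the Hahn--Jordan parts $\lambda^+$ and $\lambda^-$, which is handled by the liminf-plus-summation argument described above.
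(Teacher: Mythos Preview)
Your proposal is correct and shares the paper's key idea: the constant sequence $\mu_k\equiv\mu$ is a recovery sequence, so the whole task reduces to showing $\mathcal{E}_k(\mu)\to\mathcal{E}(\mu)$.

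Where you diverge is in how you establish that convergence. The paper does not decompose via $\mathcal{R}$ at all: it simply invokes \eqref{eq:contW}, i.e.\ $\int_X W_2^2(\mu,\nu)\,\d\lambda_k^\pm\to\int_X W_2^2(\mu,\nu)\,\d\lambda^\pm$, which was already recorded as a consequence of the uniform integrability of $\nu\mapsto m_2^2(\nu)$ (from \eqref{hyp:moment}) together with the continuity of $\nu\mapsto W_2^2(\mu,\nu)$ on $X$. Subtracting the two limits gives $\mathcal{E}_k(\mu)\to\mathcal{E}(\mu)$ in one line. Your route instead reuses the heavier Skorohod-based Lemma~\ref{lemma:gammaR} for the $\mathcal{R}$ term and a Portmanteau/liminf-summation argument for the $m_2^2$ term. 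Both are valid; the paper's is shorter and avoids re-invoking the machinery built for the genuine $\Gamma$-liminf (where $\mu_k$ is only weakly convergent), while your version has the virtue of being self-contained once Lemma~\ref{lemma:gammaR} is available.

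One small ordering remark: to apply Lemma~\ref{lemma:gammaR} to $\eta_k^\pm$ you need hypothesis \eqref{eq:m2conv}, namely $\int m_2^2\,\d\eta_k^\pm\to\int m_2^2\,\d\eta^\pm$. That is exactly what your Portmanteau/summation argument establishes, so you should carry out the moment-term step \emph{first} and then feed it into the coupling-term step, rather than the order you have now.
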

\begin{proof}
Let $\mu_k:=\mu$ for all $k\in \N$. Then, owing to \eqref{eq:contW},
\begin{align*}
	\limsup_{k \to \infty} \mathcal{E}_k(\mu_k) &= \limsup_{k \to \infty} \int_{X} W_2^2(\nu,\mu)\d\lambda_k(\nu)\\
		&= \limsup_{k \to \infty} \left(\int_{X} W_2^2(\nu,\mu)\d\lambda_k^+(\nu) - \int_{X} W_2^2(\nu,\mu)\d\lambda_k^-(\nu)\right)\\	
		&= \int_{X} W_2^2(\nu,\mu)\d\lambda^+(\nu) - \int_{X} W_2^2(\nu,\mu)\d\lambda^-(\nu)\\			
		&=  \int_{X} W_2^2(\nu,\mu)\d\lambda(\nu) = \mathcal{E}(\mu).
\end{align*}

\end{proof}
By definition of $\Gamma$ convergence, Lemma \ref{lemma:gammainf} and Lemma \ref{lemma:gammasup} ensure that $\mathcal{E}$ is the $\Gamma$-limit of $\mathcal{E}_k$ and complete the proof of part (i) of Theorem \ref{th:gamma}.

\begin{lemma}
The sequence of functionals $(\mathcal E_k)$ is equi-coercive with respect to the weak topology of $P_2^w(H)$.
\end{lemma}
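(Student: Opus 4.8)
The plan is to reduce equi-coercivity to a uniform bound on the quadratic moments of the sublevel sets, and then to invoke the lower bound of Lemma~\ref{lemma:lower} with constants that are uniform in $k$. By the remark following the definition of equi-coercivity, which rests on Corollary~\ref{cor:NS}-(ii), it suffices to produce, for every $t\in\R$, a constant $C(t)>0$ such that $\mathcal{E}_k(\mu)\le t$ forces $m_2^2(\mu)\le C(t)$, uniformly in $k$.

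First I would apply Lemma~\ref{lemma:lower} to each signed measure $\lambda_k$, which gives
\[
	\mathcal{E}_k(\mu)\ge \tfrac12\,m_2^2(\mu)-\bigl(1+2M(|\lambda_k|)\bigr)M_2^2(|\lambda_k|)\qquad\text{for all }\mu\in\X.
\]
The whole argument then hinges on controlling the two structural constants $M(|\lambda_k|)$ and $M_2^2(|\lambda_k|)$ uniformly in $k$. The second is immediate, since $\sup_k M_2^2(|\lambda_k|)<\infty$ is part of hypothesis~\eqref{eq:hypotheses}. For the first, I would test the weak convergence~\eqref{hyp:wconv} against the constant function $\varphi\equiv 1\in C^0_b(X)$, obtaining $\lambda_k^{\pm}(X)\to\lambda^{\pm}(X)$; hence $M(|\lambda_k|)=\lambda_k^+(X)+\lambda_k^-(X)$ converges and is in particular bounded, say by some $M^\ast$.

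Combining these bounds, with $S^\ast:=\sup_k M_2^2(|\lambda_k|)$ and $C_0:=(1+2M^\ast)S^\ast$, yields the uniform estimate $\mathcal{E}_k(\mu)\ge \tfrac12\, m_2^2(\mu)-C_0$ for all $k$ and all $\mu\in\X$. Consequently $\mathcal{E}_k(\mu)\le t$ implies $m_2^2(\mu)\le 2(t+C_0)=:C(t)$, and Corollary~\ref{cor:NS}-(ii) guarantees that $\{\mathcal{E}_k\le t\}$ is contained in a set that is relatively compact in $\P_2^w(H)$, which is precisely the definition of equi-coercivity. The only step requiring any care is the uniform bound on the total variations $M(|\lambda_k|)$; everything else is a direct transcription of Lemma~\ref{lemma:lower} with $k$-independent constants, so I do not expect a genuine obstacle here.
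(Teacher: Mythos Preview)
Your proposal is correct and follows essentially the same approach as the paper: apply the uniform lower bound of Lemma~\ref{lemma:lower}, control $M(|\lambda_k|)$ via~\eqref{hyp:wconv} (tested against constants) and $M_2^2(|\lambda_k|)$ via the standing hypotheses, and conclude by Corollary~\ref{cor:NS}-(ii). The only cosmetic difference is that the paper packages both uniform bounds into a single constant $M_\infty$ with $M(|\lambda_k|)+M_2^2(|\lambda_k|)<M_\infty$, citing~\eqref{hyp:wconv} and~\eqref{hyp:moment} together, whereas you separate the two bounds and draw the moment bound from~\eqref{eq:hypotheses} directly.
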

\begin{proof}
By hypotheses \eqref{hyp:wconv} and \eqref{hyp:moment}, the variation and the quadratic moment of $\lambda_k$ are uniformly bounded, i.e., there exists $M_\infty>0$ such that
\[
	M(|\lambda_k|) +M_2^2(|\lambda_k|) < M_\infty\quad \forall k\in \N.
\]
By Lemma \ref{lemma:lower}, for all $\mu \in \X$
\[
	\mathcal E_k(\mu) \geq \frac 12 m_2^2(\mu) - (1+2M(|\lambda_k|))M_2^2(|\lambda_k|)\geq \frac 12 m_2^2(\mu) - (1+2M_\infty)M_\infty.
\]
Therefore, by Corollary \ref{cor:NS}-(ii), for all $t>0$ the set
\[
	\left\{ \mu \in \X: \mathcal{E}_k(\mu) \leq t\right\} \subset \left\{ \mu \in \X: m_2^2(\mu) \leq C(t,M_\infty)\right\}
\]
is relatively sequentially compact in $\P_2^w(H)$ (with $C(t,M_\infty)=2t+2M_\infty(1+M_\infty)$).
\end{proof}
If, in particular, $\mu_k$  is a minimizer of $\mathcal{E}_k$, then the sequence $(\mu_k)_{k\in \N}$ is precompact with respect to the weak topology of $\P_2^w(H)$, which is the statement in Theorem \ref{th:gamma}-(ii). Finally, by the fundamental theorem of $\Gamma$-convergence (\cite[Theorem 7.4]{dalmaso}), if $\mu_k$  is a minimizer of $\mathcal{E}_k$, then
\[
	\min_{\mu \in \X}\mathcal E(\mu) = \lim_{k \to \infty} \mathcal{E}_k(\mu_k),
\]
and if $(\mu_{k_j})_{j\in \N}$ is a subsequence converging to $\overline \mu$ in $\P_2^w(H)$, then (\cite[Corollary 7.20]{dalmaso}) 
\[
	\mathcal E(\overline \mu) = \min_{\mu \in \X}\mathcal E(\mu),
\]
which concludes the proof of Theorem \ref{th:gamma}-(iii).

\paragraph{Acknowledgements}

We would like to thank Alessandro Spelta and Stefano Gualandi for many stimulating and insightful discussions on the applicative and computational aspects of Wasserstein barycenters. We are also grateful to the anonymous reviewer for the incisive comments and suggestions, that, from our point of view, considerably improved our work.

\bibliography{bibliography}

\begin{thebibliography}{10}

\bibitem{agueh-carlier}
M.~Agueh and G.~Carlier.
\newblock Barycenters in the {W}asserstein space.
\newblock {\em SIAM J. Math. Anal.}, 43(2):904--924, 2011.

\bibitem{ambrosio-gigli-savare}
L.~Ambrosio, N.~Gigli, and G.~Savar\'e.
\newblock {\em Gradient flows in metric spaces and in the space of probability
  measures}.
\newblock Lectures in Mathematics ETH Z\"urich. Birkh\"auser Verlag, Basel,
  2005.

\bibitem{bigot2012}
J.~Bigot and T.~Klein.
\newblock Consistent estimation of a population barycenter in the {W}asserstein
  space.
\newblock arXiv preprint, 2012.

\bibitem{bigot2018}
J.~Bigot and T.~Klein.
\newblock Characterization of barycenters in the {W}asserstein space by
  averaging optimal transport maps.
\newblock {\em ESAIM: Probability and Statistics}, 22:35--57, 2018.

\bibitem{bogachev}
V.~I. Bogachev.
\newblock {\em Measure theory}.
\newblock Springer Berlin, Heidelberg, 2007.

\bibitem{brenier}
Y.~Brenier and E.~Grenier.
\newblock Sticky particles and scalar conservation laws.
\newblock {\em SIAM J. Numer. Anal.}, 35(6):2317--2328, 1998.

\bibitem{brezis}
H.~Brezis.
\newblock {\em Functional analysis, {S}obolev spaces and partial differential
  equations}.
\newblock Universitext. Springer, New York, 2011.

\bibitem{dahlquist1956}
G.~Dahlquist.
\newblock Convergence and stability in the numerical integration of ordinary
  differential equations.
\newblock {\em Math. Scand.}, 4:33--53, 1956.

\bibitem{dalmaso}
G.~Dal~Maso.
\newblock {\em An introduction to $\Gamma$-convergence}, volume~8.
\newblock Springer Science \& Business Media, 2012.

\bibitem{gallouet2024}
T.~O. Gallou\"et, A.~Natale, and G.~Todeschi.
\newblock From geodesic extrapolation to a variational {BDF}2 scheme for
  {W}asserstein gradient flows.
\newblock {\em Math. Comp.}, 93(350):2769--2810, 2024.

\bibitem{gallouet2025}
T.~O. Gallou{\"e}t, A.~Natale, and G.~Todeschi.
\newblock Metric extrapolation in the {W}asserstein space.
\newblock {\em Calculus of Variations and Partial Differential Equations},
  64(5):147, 2025.

\bibitem{legouic}
T.~Le~Gouic and J.-M. Loubes.
\newblock Existence and consistency of {W}asserstein barycenters.
\newblock {\em Probability Theory and Related Fields}, 168:901--917, 2017.

\bibitem{matthes-plazotta}
D.~Matthes and S.~Plazotta.
\newblock A variational formulation of the {BDF}2 method for metric gradient
  flows.
\newblock {\em ESAIM Math. Model. Numer. Anal.}, 53(1):145--172, 2019.

\bibitem{mc_cann}
R.~J. McCann.
\newblock A convexity principle for interacting gases.
\newblock {\em Adv. Math.}, 128(1):153--179, 1997.

\bibitem{mosco}
U.~Mosco.
\newblock Approximation of the solutions of some variational inequalities.
\newblock {\em Annali della Scuola Normale Superiore di Pisa-Scienze Fisiche e
  Matematiche}, 21(3):373--394, 1967.

\bibitem{naldi-savare}
E.~Naldi and G.~Savar\'{e}.
\newblock Weak topology and {O}pial property in {W}asserstein spaces, with
  applications to gradient flows and proximal point algorithms of geodesically
  convex functionals.
\newblock {\em Atti Accad. Naz. Lincei Rend. Lincei Mat. Appl.},
  32(4):725--750, 2021.

\bibitem{natile-savare}
L.~Natile and G.~Savar\'{e}.
\newblock A {W}asserstein approach to the one-dimensional sticky particle
  system.
\newblock {\em SIAM J. Math. Anal.}, 41(4):1340--1365, 2009.

\bibitem{petersen}
A.~Petersen and H.-G. M\"uller.
\newblock Fr\'echet regression for random objects with {E}uclidean predictors.
\newblock {\em Ann. Statist.}, 47(2):691--719, 2019.

\bibitem{Rachev}
S.~T. Rachev and L.~R\"{u}schendorf.
\newblock {\em Mass transportation problems. {V}ol. {I} : Theory}.
\newblock Probability and its Applications (New York). Springer-Verlag, New
  York, 1998.

\bibitem{santambrogio}
F.~Santambrogio.
\newblock {\em Optimal transport for applied mathematicians : Calculus of
  variations, PDEs, and modeling}, volume~87 of {\em Progress in Nonlinear
  Differential Equations and their Applications}.
\newblock Birkh\"{a}user/Springer, Cham, 2015.

\bibitem{villani_TOT}
C.~Villani.
\newblock {\em Topics in optimal transportation}, volume~58 of {\em Graduate
  Studies in Mathematics}.
\newblock American Mathematical Society, Providence, 2003.

\bibitem{villani}
C.~Villani.
\newblock {\em Optimal transport : Old and new}, volume 338 of {\em Grundlehren
  der mathematischen Wissenschaften [Fundamental Principles of Mathematical
  Sciences]}.
\newblock Springer-Verlag, Berlin, 2009.

\end{thebibliography}

\subsubsection*{Contacts}

Francesco Tornabene: Gran Sasso Science Institute, L'Aquila.
\smallskip

\noindent Marco Veneroni: Department of Mathematics, University of Pavia, Pavia. \textit{Email address:} \texttt{marco.veneroni@unipv.it} 
\smallskip

\noindent Giuseppe Savar\'e: Department of Decision Sciences, Bocconi University, Milano.

\end{document}